\definecolor{Burgundy}{RGB}{144,0,32}
\newtheorem{tm}{Theorem}[section]
\newtheorem{pr}[tm]{Proposition}
\newtheorem{lm}[tm]{Lemma}
\newtheorem{co}[tm]{Corollary}
\newtheorem{rem}[tm]{Remark}
\newcommand{\Q}{{\mathbb Q}}
\newcommand{\C}{{\mathbb C}}
\newcommand{\Z}{{\mathbb Z}}
\newcommand{\z}{{\zeta}}
\newcommand{\e}{\varepsilon}
\newcommand{\oset}[3][0ex]{%
  \mathrel{\mathop{#3}\limits^{
    \vbox to#1{\kern-2\ex@
    \hbox{$\scriptstyle#2$}\vss}}}}
\begin{document}

\pagestyle{plain}
\title{Moments of Higher Logarithmic Derivatives and Exceptional Zeros in Cyclotomic Fields}
\author{Samprit Ghosh}
\address{Mathematical Sciences 468, University of Calgary, \newline 2500 University Drive NW, Calgary, Alberta, T2N 1N4, Canada. }
\email{samprit.ghosh@ucalgary.ca}
\thanks{}
\subjclass[2020]{Primary 11R42; Secondary 11M06, 11M41, 11M20, 11M36}
\keywords{Moments, Dirichlet L-functions, Li Coefficients, Exceptional Zeros}
\begin{abstract}
Let $\chi$ be a non-principal Dirichlet character, and let $L(s,\chi)$ be the associated Dirichlet $L$-function. We write $\mathcal{L}(s,\chi)$ for its logarithmic derivative $L'(s,\chi)/L(s,\chi)$. In this article, we prove arithmetic formulas for the higher derivatives $\mathcal{L}^{(r)}(1,\chi)$ and establish unconditional moment asymptotics for $P^{(a,b)}(\mathcal{L}^{(r)}(1,\chi))$ as $\chi$ runs over non-principal Dirichlet characters of large prime conductor. As an application, we show that these moment asymptotics imply positivity of the second Li coefficient of prime cyclotomic fields of sufficiently large conductor. Combined with a zero-free criterion in terms of this Li coefficient, this rules out the possible exceptional zero in a Stark-type zero-free region for these fields.
\end{abstract}
\maketitle
\tableofcontents

\section{Introduction}

Let $m$ be a prime number, and let $X_m$ denote the set of all non-principal
primitive Dirichlet characters 
$\chi : (\Z/m\Z)^{\times} \longrightarrow \C^{\times}$.
For $\chi \in X_m$, let $L(s,\chi)$ be the associated Dirichlet $L$-function, and write
\[
\mathcal L(s,\chi) := \frac{L'(s,\chi)}{L(s,\chi)}.
\]
For a pair of non-negative integers $(a,b)$, set $
P^{(a,b)}(z)=z^a\overline{z}^{\,b}$.

The starting point for this paper is a theorem of Ihara, Murty and Shimura
\cite{ihara2}, who studied moments of the logarithmic derivative
$\mathcal L(1,\chi)$. They proved that, for sufficiently large $m$ and any $\epsilon>0$,
\begin{equation}\label{murty318}
	\frac{1}{|X_m|} \sum_{\chi \in X_m} P^{(a,b)}(\mathcal L(1,\chi))
	= (-1)^{a+b}\mu^{(a,b)} + O(m^{\epsilon-1}),
\end{equation}
where
\[
\mu^{(a,b)}
=
\sum_{n=1}^{\infty}
\frac{\Lambda_a(n)\Lambda_b(n)}{n^2},
\qquad
\Lambda_k(n)=\sum_{n=n_1\cdots n_k}\Lambda(n_1)\cdots \Lambda(n_k).
\]
Here $\Lambda$ is the von Mangoldt function. In particular, 
\begin{equation*}
	\lim_{m \rightarrow \infty}	\frac{1}{|X_m|} \sum_{\chi \in X_m} P^{(a,b)} (\mathcal{L}(1, \chi)) = (-1)^{a+b} \mu^{(a,b)}  .
\end{equation*}

The first purpose of this article is to prove a higher-derivative analogue of
this theorem. More precisely, for every integer $r\geq 0$, we study moments of
\[
\mathcal L^{(r)}(1,\chi)
=
\left(\frac{d}{ds}\right)^r \frac{L'(s,\chi)}{L(s,\chi)}\Bigg|_{s=1}
\]
as $\chi$ ranges over $X_m$. We first obtain an arithmetic formula for
$\mathcal L^{(r)}(1,\chi)$, extending the corresponding formula for
$\mathcal L(1,\chi)$ in \cite{ihara2}. We then prove asymptotic formulas for
the averages of $P^{(a,b)}(\mathcal L^{(r)}(1,\chi))$, first under GRH and then
unconditionally. Moments of higher derivatives of $L(s,\chi)$ have been studied extensively; see, for example, work of Conrey
\cite{conrey}, Milinovich \cite{milin}, Heath-Brown \cite{brown},
Soundararajan \cite{sound1}, and Sono \cite{sono}. Thus similar results for
higher derivatives of the logarithmic derivative at $s=1$ might be of independent interest. However, our main motivation for this study comes from a different perspective. 

The second purpose of the paper is to connect these moment asymptotics with Li's
coefficients and zeros of zeta and $L$-functions. For a number field $K$, Li in \cite{licoeff} introduced the sequence of numbers  
\begin{equation}
	\lambda_n (K) = \frac{1}{(n-1)!} \frac{d^n}{ds^n} \left(s^{n-1} \log \xi_K(s) \right) \Big|_{s=1} \text{ \hspace{1cm} for } n \geq 1, 
\end{equation}
where $\xi_K(s)$ is the completed Dedekind zeta function of $K$. He proved the following result. \\
\textbf{Theorem (Li's Criterion).} 
The generalized Riemann hypothesis for $\z_K(s)$ holds if and only if $\; \lambda_n (K) \geq 0$  for all $n \geq 1$.\\ Later, Li's original criterion was extended to automorphic $L$-functions by Lagarias in \cite{lagariasli}. \vspace{2mm}\\
\textbf{Motivating Question.} The signs of individual Li coefficients are delicate and difficult to control. Is their average behavior more accessible? More precisely, for a fixed modulus $q$, what can be said about the Li coefficients averaged over Dirichlet characters $\chi$ mod $q$?\\

The moment problem studied here can be viewed as a first step toward this average theory of Li-type coefficients. Finally, we give an application of our asymptotics in the cyclotomic setting.

For the prime cyclotomic field $K_m=\Q(\zeta_m)$, the factorization of
$\zeta_{K_m}(s)$ into Dirichlet $L$-functions expresses the second Li
coefficient $\lambda_2(K_m)$ in terms of averages of $\mathcal L(1,\chi)$ and
$\mathcal L'(1,\chi)$. The unconditional moment theorem proved here shows that
these averages are negligible compared with the main $\log m$ term. As a
consequence, $\lambda_2(K_m)>0$ for all sufficiently large primes $m$.
Combining this positivity statement with a zero-free criterion rules out the possible exceptional zero in a Stark-type zero-free region.

We now state the main results.
\subsection{Statements of Main results}
Let $K$ be a number field and $\chi$ be a primitive Dirichlet character over $K$ (that is, a primitive Hecke character of finite order). Let $L(s, \chi)$ be the $L$-function associated with it. In particular, when $\chi = \chi_0$, the principal character, $L(s, \chi) = \zeta_K(s)$, the Dedekind zeta function of $K$.
\begin{tm}\label{mainthm1}
	For $\chi \neq \chi_0$ and $r\geq 0$, we have, unconditionally,
	$$\mathcal{L}^{(r)}(1, \chi) =  \lim_{x \rightarrow \infty} (-1)^{r+1} \;  \Phi_K (\chi, r , x),$$
	where $$ \Phi_K(\chi, r,x) = \frac{1}{(x-1)}\sum \limits_{k, \; N(P)^k< \; x}  \left( \dfrac{x}{N(P)^k} -1 \right)  k^r\chi(P)^k (\log N(P))^{r+1}.$$
\end{tm}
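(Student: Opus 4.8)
The plan is to follow the contour–integral method of Ihara–Murty–Shimura, carrying along the new feature that $\mathcal{L}^{(r)}(s,\chi)$ has poles of order $r+1$, rather than simple poles, at the zeros of $L(s,\chi)$. Since for $r=0$ the assertion is exactly their formula for $\mathcal{L}(1,\chi)$ quoted above, I will assume $r\ge 1$. First I would record two expansions of $\mathcal{L}^{(r)}(s,\chi)$. Differentiating the Euler product $\mathcal{L}(s,\chi)=-\sum_{n}\Lambda_\chi(n)n^{-s}$ (with $\Lambda_\chi(N(P)^k)=\chi(P)^k\log N(P)$) term by term $r$ times, and using that $k\log N(P)=\log n$ for $n=N(P)^k$, gives for $\operatorname{Re}(s)>1$
\[
\mathcal{L}^{(r)}(s,\chi)=(-1)^{r+1}\sum_{n\ge1}\frac{c_r(n)}{n^{s}},\qquad
c_r(n)=\sum_{N(P)^k=n}k^{r}\chi(P)^k(\log N(P))^{r+1}=(\log n)^{r}\,\Lambda_\chi(n).
\]
Using the standard Mellin evaluation $\tfrac1{2\pi i}\int_{(c)}\tfrac{y^{s}}{s(s-1)}\,ds=y-1$ for $y>1$, $=0$ for $0<y<1$ (valid for $c>1$), inserting this Dirichlet series, and interchanging summation and integration (absolute convergence), one obtains for any fixed $c>1$
\[
\frac{1}{2\pi i}\int_{(c)}\frac{x^{s}}{s(s-1)}\,\mathcal{L}^{(r)}(s,\chi)\,ds
=(-1)^{r+1}\sum_{n<x}c_r(n)\Bigl(\tfrac{x}{n}-1\Bigr)
=(-1)^{r+1}(x-1)\,\Phi_K(\chi,r,x).
\]

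Next I would use the Hadamard factorisation of the completed $L$-function (the ``Stark-like'' formula of \cite{ihara2}). Differentiating the partial-fraction expansion of $\mathcal{L}(s,\chi)$ $r$ times, using $\frac{d^{r}}{ds^{r}}\frac{1}{s-\varrho}=\frac{(-1)^{r}r!}{(s-\varrho)^{r+1}}$ (the convergence constants $\tfrac1\varrho$ drop out once $r\ge1$) together with the polygamma expansions of the $\Gamma$-factors that produce the trivial zeros, yields
\[
\mathcal{L}^{(r)}(s,\chi)=(-1)^{r}r!\sum_{\varrho}\frac{m_\varrho}{(s-\varrho)^{r+1}},
\]
the sum over all zeros $\varrho$ of $L(s,\chi)$ (nontrivial ones in the strip, trivial ones at non-positive integers), with $m_\varrho\ge1$ the multiplicities, converging locally uniformly off the zeros since $\sum_\varrho m_\varrho|\varrho|^{-(r+1)}<\infty$ for $r\ge1$. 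Now I would shift the line of integration above to the left past all poles; because $x>1$ and the integrand decays like $|s|^{-(r+3)}$, the horizontal and far-left pieces vanish, so the integral equals the sum of residues. At $s=1$ the pole is simple (as $L(1,\chi)\ne0$) with residue $x\,\mathcal{L}^{(r)}(1,\chi)$ — the main term, giving $\tfrac{x}{x-1}\mathcal{L}^{(r)}(1,\chi)\to\mathcal{L}^{(r)}(1,\chi)$ after dividing by $x-1$. At $s=0$ the residue is a polynomial in $\log x$ of degree $\le r+1$ (a constant if $s=0$ is not a zero), hence $O_\chi((\log x)^{r+1})$, which contributes $o(1)$ after dividing by $x-1$. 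At a zero $\varrho$ the residue is $(-1)^{r}m_\varrho\,\frac{d^{r}}{ds^{r}}\bigl[\tfrac{x^{s}}{s(s-1)}\bigr]_{s=\varrho}=x^{\varrho}Q_\varrho(\log x)$, with $Q_\varrho$ a polynomial of degree $\le r$ whose coefficients are $\ll_r m_\varrho/|\varrho|^{2}$ once $|\varrho|\ge2$ (and $O_{r,\chi}(1)$ for the finitely many small zeros).

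The one genuinely delicate step — the main obstacle — is bounding the contribution of the zeros, i.e.\ showing $\tfrac{1}{x-1}\sum_{\varrho}m_\varrho x^{\varrho}Q_\varrho(\log x)\to0$. The trivial zeros all have $\operatorname{Re}\varrho\le-1$ and contribute $\ll x^{-2}(\log x)^{r}$; for the nontrivial zeros $\varrho=\beta+i\gamma$ it suffices that $\tfrac{(\log x)^{r}}{x}\sum_{\varrho}m_\varrho x^{\beta}|\varrho|^{-2}\to0$. I would split the sum at height $T=\exp(\e\sqrt{\log x})$: the tail $|\gamma|>T$ is $\ll(\log x)^{r}\sum_{|\gamma|>T}m_\varrho|\varrho|^{-2}\ll(\log x)^{r}T^{-1}\log T$, and for $|\gamma|\le T$ the classical zero-free region $\beta<1-c/\log(|\gamma|+2)$ (for our fixed $\chi$, with some $c=c(\chi)>0$) gives $x^{\beta-1}\ll\exp(-\tfrac{c}{\e}\sqrt{\log x})$ uniformly, while the zero count up to height $T$ is $\ll_\chi T\log T$. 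Choosing $\e$ small enough that $\e-c/\e<0$ makes both pieces $o(1)$ even after the factor $(\log x)^{r}$. Combining all of this gives $(-1)^{r+1}\Phi_K(\chi,r,x)=\tfrac{x}{x-1}\mathcal{L}^{(r)}(1,\chi)+o(1)\to\mathcal{L}^{(r)}(1,\chi)$, as claimed. This zero-sum estimate is the exact analogue, for $r\ge1$, of the one in \cite{ihara1,ihara2}, the only new feature being the harmless polynomial factor $Q_\varrho(\log x)$; so I expect the obstacle to be serious mainly as a bookkeeping matter rather than a conceptual one.

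Finally, I would note a shortcut that avoids the contour entirely and uses only the classical prime number theorem for $L(s,\chi)$. Since $c_r(n)=(\log n)^{r}\Lambda_\chi(n)$, write
\[
\Phi_K(\chi,r,x)=\frac{x}{x-1}\sum_{n<x}\frac{(\log n)^{r}\Lambda_\chi(n)}{n}\;-\;\frac{1}{x-1}\sum_{n<x}(\log n)^{r}\Lambda_\chi(n).
\]
Partial summation applied to the bound $\sum_{n\le x}\Lambda_\chi(n)\ll_\chi x\exp(-c\sqrt{\log x})$ shows the second sum is $o(x)$ and that the first sum converges as $x\to\infty$; its limit is $\sum_{n}(\log n)^{r}\Lambda_\chi(n)/n=(-1)^{r+1}\mathcal{L}^{(r)}(1,\chi)$, by Abel's theorem for Dirichlet series applied to $\mathcal{L}^{(r)}(s,\chi)=(-1)^{r+1}\sum_n(\log n)^{r}\Lambda_\chi(n)n^{-s}$ as $s\to1^{+}$. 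Hence $(-1)^{r+1}\Phi_K(\chi,r,x)\to\mathcal{L}^{(r)}(1,\chi)$. Either route also handles the number-field/Hecke-character generality verbatim, the $\Gamma$-factors contributing only the (harmless) trivial-zero terms above.
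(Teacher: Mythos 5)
Your primary argument is essentially the paper's own proof. The paper evaluates $\frac{1}{2\pi i}\int_{(c)}\frac{x^{s}}{s(s-1)}\mathcal{L}^{(r)}(s,\chi)\,ds$ (written there as $x\Psi_\chi(1,r,x)-\Psi_\chi(0,r,x)$) in two ways: once via the differentiated Euler product \eqref{eulerprod4}, giving $(-1)^{r+1}(x-1)\Phi_K(\chi,r,x)$, and once via the differentiated Stark-like formula \eqref{starkliked}, collecting the zero contribution $\mathfrak{Z}_\chi(r,x)$ and the Gamma contribution $\mathfrak{G}_\chi(r,x)$ and killing the error terms with the standard zero-free region — exactly your residue computation. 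The only organizational difference is cosmetic: you read off the main term as the residue $x\,\mathcal{L}^{(r)}(1,\chi)$ of the kernel at $s=1$ against the (regular there) function $\mathcal{L}^{(r)}(s,\chi)$, whereas the paper reassembles $\mathcal{L}^{(r)}(1,\chi)$ from $(-1)^{r}r!\sum_\rho(1-\rho)^{-(r+1)}+\tilde{\Gamma}^{(r)}_\chi(1)$ via \eqref{starklike2}; these are the same computation, and your residue polynomials $Q_\varrho(\log x)$ are precisely the $E_1(r,x)$ terms in \eqref{Rhofac}. (Your claim that the integrand decays like $|s|^{-(r+3)}$ on the shifted contours is not literally the right exponent — $\mathcal{L}^{(r)}(s,\chi)$ only decays like $|s|^{-r}$ in the far left half-plane — but the kernel alone gives $|s|^{-2}x^{\sigma}$ and the conclusion stands.) Your closing shortcut via partial summation against $\psi_\chi(x)\ll_\chi x\exp(-c\sqrt{\log x})$ and Abel's theorem is a genuinely different and more elementary route, and it is correct for a fixed $\chi$ (a possible exceptional zero is a fixed $\beta_1<1$ and is harmless); what it does not deliver is the explicit, $\chi$- and $m$-uniform error term $E_1(r,x)$ that the paper reuses in Proposition \ref{zerosum1} and the proofs of Theorems \ref{mainthm2} and \ref{mainthm3}, which is presumably why the contour route is the one developed in the text.
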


For results on moments, we focus on $K=\Q$ and introduce some more notation. We define  
\begin{equation}\label{defnLambda}
	\Lambda_{r, k}(n) = \sum_{n = n_1 \cdots n_k} \Lambda(n_1) \cdots \Lambda(n_k) (\log n_1)^r \cdots (\log n_k)^r \;\;\;\; \text{ for } k>0.
\end{equation}
For $k = 0$, define $\Lambda_{r, 0}(n) = 1$ if $n=1$ (irrespective of $r$) and $0$ otherwise. Also, let $m$ be a large prime number, and let $X_m$ be the collection of all non-principal primitive Dirichlet characters $\chi : (\Z / m\Z)^{\times} \rightarrow \C^{\times}$.
\begin{tm}\label{mainthm2}
	Under GRH, for every $r \geq 0$ and every pair of non-negative integers $(a,b)$, we have
	\begin{equation*}
		\frac{1}{|X_m|} \sum_{\chi \in X_m} P^{(a,b)} (\mathcal{L}^{(r)}(1, \chi)) = (-1)^{(r+1)(a+b)} \mu^{(a,b)}(r) + O \left( \frac{(\log m)^{(r+1)(a+b)+2}}{m} \right).
	\end{equation*}
	Here, the implicit constant depends on $a, b$. In particular, 
	\begin{equation}\label{limitformula}
		\lim_{m \rightarrow \infty}  	\frac{1}{|X_m|} \sum_{\chi \in X_m} P^{(a,b)} (\mathcal{L}^{(r)}(1, \chi)) \; = \;  (-1)^{(r+1)(a+b)} \mu^{(a,b)}(r). \;\;\;\;\;\;\;\;\;\;\;\;\;\;\;\;
	\end{equation}
	Here $\mu^{(a,b)}(r)$ is defined as  
	$$ \mu^{(a,b)}(r) = \sum_{j=1}^{\infty} \frac{ \Lambda_{r,a}(j) \;  \Lambda_{r,b}(j)}{j^2}.$$
\end{tm}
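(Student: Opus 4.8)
The plan is to follow the moment-computation strategy of Ihara--Murty--Shimura, adapting it to the higher-derivative setting via Theorem \ref{mainthm1}. The starting point is the arithmetic formula
$$\mathcal{L}^{(r)}(1,\chi) = (-1)^{r+1}\lim_{x\to\infty}\Phi_K(\chi,r,x),$$
which, after the change of variables in the sum, expresses $\mathcal{L}^{(r)}(1,\chi)$ essentially as a Dirichlet series $(-1)^{r+1}\sum_{n\geq 2}\frac{c_r(n)}{n}\chi(n)$, where $c_r(n) = k^r(\log p)^{r+1}$ when $n = p^k$ and $0$ otherwise (the weights $\frac{x}{N(P)^k}-1$ over $x-1$ being a smoothing device that disappears in the limit). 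First I would make this rigorous under GRH: GRH gives quantitative control on the tail, so that truncating the series at $n \leq m^\delta$ for a suitable $\delta$ (or at $n \leq (\log m)^A$) incurs an error that is negligible after averaging. This converts $P^{(a,b)}(\mathcal{L}^{(r)}(1,\chi))$ into $(-1)^{(r+1)(a+b)}$ times a product of $a$ copies of the truncated series and $b$ copies of its conjugate, i.e. a finite sum over $(n_1,\dots,n_a,n'_1,\dots,n'_b)$ of $\frac{c_r(n_1)\cdots c_r(n_a)c_r(n'_1)\cdots c_r(n'_b)}{n_1\cdots n_a n'_1\cdots n'_b}\,\chi(n_1\cdots n_a)\overline{\chi(n'_1\cdots n'_b)}$.

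The second step is the orthogonality of characters: averaging $\chi(N)\overline{\chi(N')}$ over $\chi \in X_m$ gives (up to the $O(1/|X_m|)$ from excluding the principal character, and the fact that $|X_m| = \phi(m)-1 = m-2$) the indicator that $N \equiv N' \pmod m$. Since the truncation keeps $N = n_1\cdots n_a$ and $N' = n'_1\cdots n'_b$ both smaller than $m$, the congruence $N \equiv N' \pmod m$ forces the \emph{exact} equality $N = N'$. Collecting the diagonal terms $N = N' = j$ and recognizing that $\sum_{j = n_1\cdots n_a} c_r(n_1)\cdots c_r(n_a) = \Lambda_{r,a}(j)$ (this is exactly the definition \eqref{defnLambda}, since $c_r(p^k) = k^r(\log p)^{r+1} = \Lambda(p^k)(\log p^k)^r \cdot$ --- wait, one checks $\Lambda(p^k)(\log p^k)^r = (\log p)(k\log p)^r = k^r(\log p)^{r+1} = c_r(p^k)$, so indeed the bookkeeping matches), the main term becomes $(-1)^{(r+1)(a+b)}\sum_{j\geq 1}\frac{\Lambda_{r,a}(j)\Lambda_{r,b}(j)}{j^2} = (-1)^{(r+1)(a+b)}\mu^{(a,b)}(r)$, with the truncation error on this series also controlled since $\Lambda_{r,k}(j) \ll_k j^\epsilon (\log j)^{\text{const}}$.

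The third step is bookkeeping the error terms to reach the stated $O((\log m)^{(r+1)(a+b)+2}/m)$. There are three sources: (i) the $-1$ in $|X_m| = m-2$ versus $m$, and the exclusion of $\chi_0$, contributing a term of size $\frac{1}{m}$ times (the truncated sum evaluated trivially), which is $\ll (\log m)^{(r+1)(a+b)}/m$ if one truncates at a power of $\log m$; (ii) the off-diagonal terms, which vanish outright by the exact-equality argument once the truncation level is below $m$; (iii) the tails of the $r$-th-derivative series beyond the truncation, bounded under GRH. The exponent $(r+1)(a+b)+2$ suggests the optimal truncation is around $n \leq (\log m)^{\text{something}}$ so that each of the $a+b$ truncated factors contributes a $(\log m)^{r+1}$-type factor (consistent with $\mathcal{L}^{(r)}(1,\chi) \ll (\log\log)^{r+1}$-ish bounds from the GRH estimate quoted in the introduction) with two spare logs from summation/the smoothing. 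I would optimize the truncation parameter $\delta$ (or $A$) at the end to balance the GRH tail against the $1/m$-type terms.

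The main obstacle I expect is step one: making the passage from the \emph{limit} formula of Theorem \ref{mainthm1} to a genuinely \emph{truncated} finite Dirichlet sum with an explicit, averageable error term. The quantity $\Phi_K(\chi,r,x)$ is defined through a limit in $x$, and the higher power $(\log N(P))^{r+1}$ makes the tail heavier than in the $r=0$ case of Ihara--Murty--Shimura, so the GRH-based effective bound on $\big|\mathcal{L}^{(r)}(1,\chi) - (-1)^{r+1}\sum_{n\leq T}\frac{c_r(n)}{n}\chi(n)\big|$ must be established carefully (via a contour-shift / explicit-formula argument analogous to the $\Phi^{(\mu)}(x)$ computation sketched in the introduction, now differentiated $r$ times, picking up $\frac{(\log x)^j}{x}$-type terms and a zero-sum that GRH places on $\Re s = 1/2$). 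Once that effective truncation is in hand, steps two and three are essentially the Ihara--Murty--Shimura argument with $\Lambda_k$ replaced by $\Lambda_{r,k}$ and are routine.
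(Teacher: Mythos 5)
There is a genuine gap, and it sits at the junction of your steps one and two. Under GRH the best available cancellation for $\sum_{n\le x}\chi(n)\Lambda(n)(\log n)^r/n$ is of square-root type, so to make the tail of the truncated series smaller than the target error $O\bigl((\log m)^{(r+1)(a+b)+2}/m\bigr)$ you are forced to truncate at $x\gg m^{2}$ (the paper takes exactly $x=m^{2}$, obtaining $\mathcal{L}^{(r)}(1,\chi)=(-1)^{r+1}\Phi(\chi,r,x)+O\bigl(\log m\,(\log x)^{r}/\sqrt{x}+(\log x)^{r+2}/x\bigr)$). Truncating at $(\log m)^{A}$ gives no usable GRH saving at all (the explicit-formula error $\sqrt{x}(\log xm)^{2}$ exceeds the trivial bound there), and truncating at $m^{\delta}$ with $\delta<1/\max(a,b)$ leaves a tail of size roughly $m^{-\delta/2+\epsilon}$, far larger than $1/m$. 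But once the truncation is at $m^{2}$, the products $N=n_1\cdots n_a$ and $N'=n'_1\cdots n'_b$ range well beyond $m$, so the congruence $N\equiv N'\ (\mathrm{mod}\ m)$ does \emph{not} force $N=N'$, and your claim that ``the off-diagonal terms vanish outright by the exact-equality argument'' fails. The off-diagonal contribution is a real term that must be estimated, not dismissed.

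The paper handles this in Proposition \ref{momchigen}: after orthogonality it writes $\lambda^{(k)}(j,x)=\sum_{l\ge 0}L^{(k)}(j+lm,x)$, keeps the smoothing weights $\frac{1}{x-1}\bigl(\frac{x}{n}-1\bigr)$ throughout (showing on the diagonal that $\frac{1}{(x-1)^{k}}\prod_i(\frac{x}{n_i}-1)=\frac1j+O(\frac1x)$ rather than discarding them), and bounds the $l\ge 1$ terms by $L^{(k)}(N,x)\le k^{k}(\log x)^{(r+1)k}/N$ together with a harmonic sum, yielding an off-diagonal contribution of $O\bigl((\log x)^{(r+1)k+1}/m\bigr)$. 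This off-diagonal estimate is precisely the ingredient missing from your outline, and it is where the exponent $(r+1)(a+b)+2$ in the error term actually comes from; your diagnosis that the only delicate point is the effective truncation bound locates the difficulty in the wrong place. The rest of your outline (the identification $c_r(p^{k})=\Lambda(p^{k})(\log p^{k})^{r}$, the diagonal main term giving $\mu^{(a,b)}(r)$, and the sign $(-1)^{(r+1)(a+b)}$) agrees with the paper.
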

\begin{tm}\label{mainthm3}
	Unconditionally, for every $r \geq 0$ and every pair of non-negative integers $(a,b)$, we have 
	\begin{equation*}
		\frac{1}{|X_m|} \sum_{\chi \in X_m} P^{(a,b)} (\mathcal{L}^{(r)}(1, \chi)) = (-1)^{(r+1)(a+b)} \mu^{(a,b)}(r) + O_{r,a,b} \left( m^{\e - 1}\right),
	\end{equation*}
	for any $\e > 0$. In particular, the limit formula in \eqref{limitformula} is unconditional.
\end{tm}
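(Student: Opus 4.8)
The plan is to bootstrap from the conditional result (Theorem \ref{mainthm2}) by removing the hypothesis GRH. The key observation is that GRH was only used in two places in the proof of Theorem \ref{mainthm2}: first, to control the tail of the Dirichlet-series expansion of $\mathcal{L}^{(r)}(1,\chi)$ in terms of $\Lambda_{r,k}(n)/n$, and second, to bound $\mathcal{L}^{(r)}(1,\chi)$ itself (uniformly in $\chi$) so that the contribution of a sparse set of "bad" characters to the moment is negligible. To argue unconditionally, I would replace the pointwise GRH bound on $\mathcal{L}^{(r)}(1,\chi)$ by a bound on a high moment: using the orthogonality of characters mod $m$ one shows that for each fixed large even integer $2N$,
\begin{equation*}
	\frac{1}{|X_m|}\sum_{\chi\in X_m}\left|\mathcal{L}^{(r)}(1,\chi)\right|^{2N} \ll_{N,r} (\log m)^{C(N,r)}
\end{equation*}
for some explicit $C(N,r)$, since the $2N$-th moment expands (via the arithmetic formula of Theorem \ref{mainthm1}, or directly via the explicit formula) into a main term built out of the $\Lambda_{r,k}$'s plus an error controlled by the large sieve / orthogonality with a saving of a full power of $m$. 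This replaces GRH entirely for the moment computation.

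The main steps, in order, are as follows. First I would write, for $x$ a parameter to be chosen as a small power of $m$, the truncated identity coming from Theorem \ref{mainthm1}:
\begin{equation*}
	\mathcal{L}^{(r)}(1,\chi) = (-1)^{r+1}\sum_{k,\,N(P)^k<x}\left(\tfrac{x}{N(P)^k}-1\right)k^r\chi(P)^k(\log N(P))^{r+1} + (\text{error depending on }x),
\end{equation*}
and expand $P^{(a,b)}$ of this, i.e. take the $a$-fold product against the $b$-fold conjugate product. Second, applying orthogonality $\frac{1}{|X_m|}\sum_{\chi\in X_m}\chi(n)\overline{\chi(n')} = \mathbf{1}_{n\equiv n'} - \frac{1}{m-1}$ for $(nn',m)=1$, the diagonal $n=n'$ produces, after letting $x\to\infty$ in the smoothed sum, exactly $(-1)^{(r+1)(a+b)}\mu^{(a,b)}(r)$ together with tails of the series $\sum \Lambda_{r,a}(j)\Lambda_{r,b}(j)/j^2$ — which converges because $\Lambda_{r,k}(j)\ll_{\e,r,k} j^{\e}$ — so those tails are $O(x^{\e-1})$ or so. Third, the off-diagonal terms $n\neq n'$ each carry the factor $1/(m-1)$; there are $O(x^{2+\e})$ of them with summand of size $O(x^{\e})$, giving $O(x^{2+\e}/m)$. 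Fourth, one chooses $x = m^{1/3}$ (any fixed small power works), so the off-diagonal contributes $O(m^{\e-1/3})$ — not yet good enough, which is why the argument must be organized more carefully: rather than truncating crudely at $x$, I would follow \cite{ihara2} and keep the smoothing parameter $x$ growing with $m$ like a power very close to $m$, tracking that the relevant "diagonal minus main term" and "off-diagonal" errors are both $O(m^{\e-1})$ after balancing. The moment bound from the previous paragraph is what lets the tail of characters where the truncation is poor be absorbed into $O(m^{\e-1})$.

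The main obstacle is precisely this balancing: unconditionally one does not have a clean pointwise bound $|\mathcal{L}^{(r)}(1,\chi)| \ll (\log m)^{r+1}$, so the contribution from characters with an exceptional (Siegel) zero, or merely with a zero close to $s=1$, must be handled by a moment/large-sieve argument rather than termwise. Concretely, the difficulty is to show that
\begin{equation*}
	\frac{1}{|X_m|}\sum_{\chi\in X_m}\left|\mathcal{L}^{(r)}(1,\chi) - (-1)^{r+1}\Phi_K(\chi,r,x)\right|\cdot\left(\text{something of size }(\log m)^{O(1)}\right) = O(m^{\e-1})
\end{equation*}
for a suitable $x=x(m)$, using Cauchy–Schwarz against the high-moment bound above and the classical zero-free region (to handle the one possible exceptional character separately, noting it contributes $1/|X_m|$ times a quantity that is at worst polynomial in $m$). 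Everything else — the main term identification, the convergence of $\mu^{(a,b)}(r)$, the off-diagonal estimate — is routine bookkeeping with $\Lambda_{r,k}$ in place of $\Lambda_k$, essentially identical to \cite{ihara2} but with the extra logarithmic weights $(\log n_i)^r$ accounted for, which only changes exponents of $\log m$ and is harmless for an $m^{\e-1}$ bound.
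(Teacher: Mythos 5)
There is a genuine gap, and it sits exactly at the point you label ``the main obstacle.'' Your plan rests on an asserted unconditional high-moment bound $\frac{1}{|X_m|}\sum_{\chi}|\mathcal{L}^{(r)}(1,\chi)|^{2N}\ll_{N,r}(\log m)^{C(N,r)}$, which you justify by saying the $2N$-th moment ``expands via the arithmetic formula of Theorem \ref{mainthm1}\dots with a saving of a full power of $m$'' from orthogonality. But this is circular: orthogonality only controls moments of the truncated sums $\Phi(\chi,r,x)$ (that is the content of Proposition \ref{momchigen}, which is already unconditional), and passing from $\Phi(\chi,r,x)$ to $\mathcal{L}^{(r)}(1,\chi)$ with a power saving in $m$ \emph{on average over $\chi$} is precisely the step that needs a nontrivial input about zeros. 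The quantity $\mathcal{L}^{(r)}(1,\chi)-(-1)^{r+1}\Phi(\chi,r,x)$ is, up to the harmless $\Gamma$-contribution, a sum of terms of size $x^{\beta-1}$ over the nontrivial zeros $\rho=\beta+i\gamma$ of $L(s,\chi)$; unconditionally a single character could a priori have many zeros with $\beta$ close to $1$, and neither the large sieve nor character orthogonality says anything about this. The asserted moment bound for $r=0$, $N$ large is essentially the theorem of \cite{ihara2} itself, so it cannot be taken as an input. The missing idea is a log-free-in-$m$ zero-density estimate averaged over the family: the paper uses Montgomery's bound $N(\sigma,T,m)\ll(mT)^{2(1-\sigma)/\sigma}(\log mT)^{14}$ for $\sigma\ge 4/5$ to prove (Lemmas \ref{sublem3} and \ref{sublem4}) that for $x\ge(mT)^{6}$ one has $\sum_{\chi\in X_m}\sideset{}{'}\sum_{\rho}x^{\beta}\ll x(\log x)^{14}$, whence, after dividing by $x-1$ with $x=m^{12}$, the \emph{first} moment $\sum_{\chi\neq\chi_1}|\Phi(\chi,r,x)-\mathcal{L}^{(r)}(1,\chi)|\ll(\log x)^{2r+15}$ (Proposition \ref{zerosum1}); combined with a pointwise bound from the classical zero-free region and a separate treatment of the exceptional character $\chi_1$ (whose single contribution is $\ll m^{\e(a+b)}/|X_m|$), this yields the $O_{r,a,b}(m^{\e-1})$ error. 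Your proposal never identifies this density theorem, and without it the argument does not close.

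A secondary, more minor issue: your off-diagonal estimate $O(x^{2+\e}/m)$ is far too crude and leads you into a false balancing problem at $x=m^{1/3}$. The correct treatment (already in Proposition \ref{momchigen}) exploits the decay $\frac{1}{(x-1)^{k}}\prod_i(\frac{x}{n_i}-1)\le \frac{1}{n_1\cdots n_k}$, so the terms with $n_1\cdots n_k\equiv j\ (\mathrm{mod}\ m)$, $n_1\cdots n_k>j$, contribute only $O((\log x)^{(r+1)k+1}/m)$; this is why the paper can afford to take $x$ as a \emph{large} power of $m$ (namely $m^{12}$, forced by the zero-density lemma), rather than a small one. Your overall architecture --- main term from the diagonal, isolate the Siegel character, control the truncation error in mean rather than pointwise --- matches the paper's, but the concrete engine that makes the mean-value control unconditional is absent.
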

A well-known result of Stark \cite[Lemma 3]{stark1} says that, for $K \neq \Q$, $\z_K(s)$ has at most one zero in the region
\begin{equation*}
	1 - \dfrac{1}{4 \log d_K} \leq \Re(s) \leq 1, \;\;\;\; |\Im(s)| \;  \leq \dfrac{1}{4 \log d_K}.
\end{equation*}

Moreover, if this zero exists, it is necessarily real and simple. The next result is obtained by applying Theorem \ref{mainthm3} with
$r=0$ and $r=1$ to the factorization of the Dedekind zeta function of a
cyclotomic field. The resulting positivity of $\lambda_2(K_m)$ rules out the
possible exceptional zero in a wider Stark-type region, as shown later in Section \ref{sec4}, Theorem \ref{mainthm4}.
\begin{tm}\label{mainthm5}
	Let $m$ be a prime number and $K_m = \Q(\zeta_m)$ denote the cyclotomic extension. For $m$ sufficiently large, the Dedekind zeta function $\z_{K_m}(s)$ has no zeros in the region 
	\[
	1 -\dfrac{1}{3 \log d_{K_m}} < \text{Re}(s) < 1, \;\; | \text{Im}(s)| < \dfrac{1}{3 \log d_{K_m}},
	\]
	where $d_{K_m} = m^{m-2}$ is the absolute value of the discriminant. In particular, the exceptional zero does not exist for sufficiently large $m$.
\end{tm}
\begin{co}\label{maincor1}
	For a sufficiently large prime number $m$, consider the quadratic fields  
	\[
	\Q_m =
	\begin{cases}
		\Q(\sqrt m), & \text{ if }m\equiv 1\pmod 4,\\
		\Q(\sqrt{-m}), & \text{ if }m\equiv 3\pmod 4.
	\end{cases}
	\]
	The Dedekind zeta function $\z_{Q_m}(s)$ has no real zeros in the region 
	\[
	1 -\dfrac{1}{3(m-2) \log m} < \text{Re}(s) < 1.
	\]
	In particular, the exceptional zero $\beta$, if it exists, satisfies $\beta < 1 -\dfrac{1}{3(m-2) \log m}$.
\end{co}

\section*{Acknowledgments.}
The author sincerely thanks Prof. V. Kumar Murty for his valuable suggestions, encouragement and many insightful discussions during the developement of this work. The author is currently partially supported by NSERC grants RGPIN-2018-03770, RGPIN-2020-06075, and by CRC tier-2 research stipend 950-231716 at the University of Calgary. The author thanks Prof. Khoa D. Nguyen for their financial support and mentorship.

\section{An explicit arithmetic formula for $\mathcal{L}^{(r)}(1, \chi)$}\label{Momexact}
Let $K$ be a number field and $\chi$ be a primitive Dirichlet character over $K$ (i.e. a primitive Hecke character of finite order). Let $L(s, \chi)$ be the $L$-function associated with it. In particular, when $\chi = \chi_0$, the principal character, $L(s, \chi) = \zeta_K(s)$, the Dedekind zeta function of $K$. The completed $L$-function has an expression of the form
\begin{equation} \label{funceqn}
\xi(s,\chi) = A B^{\frac{s}{2}} \Gamma \left(\dfrac{s+1}{2}\right)^a \Gamma \left(\dfrac{s}{2}\right)^{a'}\Gamma(s)^{r_2} \; L(s, \chi),
\end{equation}
and satisfies a functional equation $\xi(s,\chi) = W(\chi) \xi(1-s, \overline{\chi})$, where $W(\chi)$ has absolute value $1$. Here $A$ and $B$ are constants involving $2$, $\pi$, the discriminant $d_K$ of $K$, and the conductor  $\mathfrak{f}_{\chi}$. We are not concerned with writing $A$ and $B$ explicitly here, since in this article we are only concerned with higher derivatives of $\xi'(s,\chi)/\xi(s,\chi)$. Interested readers are directed to p. 211 of \cite{cassels} or to Hecke's original proof \cite{hecke} for details concerning $A$ and $B$.

Here $a$ (respectively $a'$) is the number of real places of $K$ where $\chi$ is ramified (resp. unramified), $r_1 = a+a'$ is the number of real places of $K$ and $r_2$ is the number of complex places. For $\chi \neq \chi_0$, taking the logarithmic derivative of (\ref{funceqn}) and using the Hadamard product one can deduce a Stark-like lemma (see Lemma 2.1 of \cite{stark1} or p. 83 of \cite{dave})
  \begin{equation} \label{starklike}
\dfrac{L'(s, \chi)}{L(s, \chi)} = C - \frac{a}{2} \frac{\Gamma'}{\Gamma} \left( \dfrac{s}{2}\right)  -  \frac{a'}{2} \frac{\Gamma'}{\Gamma} \left( \dfrac{s+1}{2}\right) - r_2  \frac{\Gamma'}{\Gamma} \left( s \right) + \sum_{\rho} \left( \dfrac{1}{s - \rho} + \dfrac{1}{\rho} \right).
  \end{equation}
Here $C$ is a constant involving $\log$ of terms in $B$ in \eqref{funceqn}. For brevity, we write $$\mathcal{L}(s, \chi) = 	\dfrac{L'(s, \chi)}{L(s, \chi)}  \;\;\;\;\;\; \text{ (say)}.$$
On the other hand, we can take the logarithmic derivative of the Euler product of  $L(s,\chi)$ to get
\begin{equation}\label{eulerprod}
	\mathcal{L}(s, \chi) = - \sum_{P, k} \left(\dfrac{\chi(P)}{N(P)^s} \right)^k \log N(P).
\end{equation}
To obtain the arithmetic formula, we follow computations very similar to those in the proof of Theorem 1.8 in \cite{ghosh1}.

The following arithmetic formulas and bounds were proved in \cite[Corollary 2.2.5]{ihara2}. \\
\textbf{Theorem.} \emph{(Ihara, Murty, Shimura)} \\
\emph{If $\chi \neq \chi_0$, then 
	\begin{equation}
		\mathcal{L}(1,\chi)  = - \; \lim_{x \rightarrow \infty} \Phi_{K, \chi}(x),
	\end{equation}
	where 
	\begin{equation*}
		\Phi_{K, \chi} (x) = \dfrac{1}{x - 1} \sum_{N(P)^k \leq x} \left(\dfrac{x}{N(P)^k} - 1 \right) \chi(P)^k \; \log N(P) \;\;\;\; ( \text{ for } x>1).
	\end{equation*}
	Here, $k$ is a positive integer and the sum is taken over non-Archimedean primes. } \\

Under GRH, they have also shown the upper bound (see \cite[Theorem 3]{ihara2})  
\begin{equation*}
	\left| \mathcal{L}(1,\chi)  \right| < 2 \; \log \log \sqrt{d_{\chi}} \; + 1 - \gamma_{K} + \; O \left( \frac{ \log |d_K| + \log \log d_{\chi}}{\log d_{\chi}}\right).
\end{equation*}
Here, $\chi$ is a non-principal primitive Dirichlet character of a number field $K$ with conductor $\mathfrak{f}_{\chi}$. Furthermore, $d_{\chi} = |d_K| N(\mathfrak{f}_{\chi})$ and $\gamma_{K}$ is the Euler-Kronecker constant of $K$. \\

For $r \geq 1$, we take the $r$-th derivative of (\ref{eulerprod}) to get
\begin{equation}\label{eulerprod4}
	\mathcal{L}^{(r)}(s, \chi) =  (-1)^{r+1} \sum_{P, k} k^r \left(\dfrac{\chi(P)}{N(P)^s} \right)^k (\log N(P))^{r+1}.
\end{equation} 
Similarly differentiating (\ref{starklike}) $r$ times, 
\begin{equation}\label{starkliked}
	\mathcal{L}^{(r)}(s, \chi) = (-1)^r r! \sum_{\rho} \frac{1}{(s-\rho)^{r+1} } \; + \; \tilde{\Gamma}_{\chi}^{(r)}(s).
\end{equation}
Thus, letting $s\rightarrow 1$, we get 
\begin{equation}\label{starklike2}
    \mathcal{L}^{(r)}(1, \chi) = (-1)^r r! \sum_{\rho} \frac{1}{(1-\rho)^{r+1} } \; + \; \tilde{\Gamma}_{\chi}^{(r)}(1).
\end{equation}
In a previous paper \cite{ghosh1}, the author introduced the higher Euler-Kronecker constants. These are the coefficients $\gamma_{K,r}$ that appear in the Laurent series expansion of the logarithmic derivative of the Dedekind zeta function about $s=1$ : 
$$ \dfrac{\z_K'(s)}{\z_K(s)} = \dfrac{-1}{s-1}\; + \; \gamma_{K,0} \; + \; \sum_{r=1}^{\infty} \gamma_{K,r}(s-1)^r \;.$$ To prove Theorem \ref{mainthm1}, we will closely follow its counterpart for the Dedekind zeta function, as in \cite[Section 5]{ghosh1}. 
We will evaluate the integral
$$\Psi_{\chi}(\mu, r, x) = \dfrac{1}{2\pi i}\int_{c-i\infty}^{c+i \infty} \dfrac{x^{s-\mu}}{s-\mu} \mathcal{L}^{(r)}(\chi, s) \;ds \;\;\;\;\; \text{ for } c\gg 0,$$ in two different ways using equations (\ref{eulerprod4}) and  (\ref{starkliked}), for $\mu = 0$ and $1$. Using \eqref{eulerprod4} we get
 \begin{align}\label{eulerprod5}
\nonumber	& x \Psi_{\chi}(1, r, x) - \Psi_{\chi}(0, r, x) \\
& \;\;\; =    (-1)^{r+1} \sum \limits_{k, \; N(P)^k< \; x}  \left( \dfrac{x}{N(P)^k} -1 \right)  k^r\chi(P)^k (\log N(P))^{r+1}.  \;\;\;
\end{align}
The details for the above are the same as those in \cite[Lemma 5.1]{ghosh1}, except for an extra $\chi(P)^k$ factor, and a negative sign. This motivates the following definition
\begin{equation}\label{defnPhichi}
    \Phi_K(\chi, r,x) = \frac{1}{(x-1)}\sum \limits_{k, \; N(P)^k< \; x}  \left( \dfrac{x}{N(P)^k} -1 \right)  k^r\chi(P)^k (\log N(P))^{r+1}.
\end{equation}

On the other hand, we can similarly compute the contribution from the $\sum_{\rho}$ term and $\Gamma$-factor. Let us denote 
\begin{equation}\label{Contzero}
 \mathfrak{Z}_\chi(r,x)= \frac{(-1)^{r} r!}{2 \pi i} \int_{c - i \infty}^{c + \infty} \sum x^s \left[ \frac{1}{(s-1)(s- \rho)^{r+1}} - \frac{1}{s(s-\rho)^{r+1}}\right]ds, \text{ and}
 \end{equation}
 \begin{equation} \label{Contgamma}
 	\mathfrak{G}_\chi(r,x) = \frac{x}{2 \pi i} \int_{c - i \infty}^{c + \infty} \frac{x^{s-1}}{s-1 } \tilde{ \Gamma}^{(r)}_{\chi}(s) \; ds \; - \;  \frac{1}{2 \pi i} \int_{c - i \infty}^{c + \infty} \frac{x^{s}}{s } \tilde{ \Gamma}^{(r)}_{\chi}(s) \; ds.
 \end{equation}

For the nontrivial zeros, we carry out contour computations similar to those presented in Lemma 5.5 of \cite{ghosh1} and the discussion before the lemma to deduce
\begin{align} \label{Rhofac}
	\nonumber	\frac{\mathfrak{Z}_{\chi}(r,x)}{x-1} & = r!  (-1)^{r}  \sum_{\rho} \frac{1}{(1- \rho)^{r+1}} + \frac{[(-1)^{r} - 1]}{(x-1)}\sum \dfrac{r!}{\rho^{r+1}} \\ \nonumber &+\sum_{\rho} \left( (-1)^{r} \sum_{k=0}^{r} (-1)^k k! \binom{r}{k} \left[ \frac{1}{(\rho-1)^{k+1}} - \frac{1}{\rho^{k+1}} \right] \frac{ x^{\rho} (\log x)^{r-k}}{(x-1)}\right). \\
        &= r!  (-1)^{r}  \sum_{\rho} \frac{1}{(1- \rho)^{r+1}} \; + \; E_1(r,x) \;\; \text{(say)}.
	\end{align}
    
Furthermore, we can make use of the standard zero-free regions of Hecke L-functions (see \cite[Lemma 2.3]{LagOd}) to ensure that the rest of the argument can be carried out similarly, i.e. $\lim_{x \rightarrow \infty} E_1(r,x) = 0$. Thus, for $r \geq 1$, we have
	$$\lim_{x \rightarrow \infty}\frac{\mathfrak{Z}_{\chi}(r,x)}{(x-1)} = (-1)^{r} r!  \sum \frac{1}{(1- \rho)^{r+1}}.   $$
We also note that, under GRH 
\begin{equation}
   E_1(r,x) = O \left( \frac{(\log x)^r}{\sqrt{x}}(r! \; 2^{r-1}\log d_\chi)  \right),
\end{equation}
where $d_{\chi} = |d_K|N(\mathfrak{f}_{\chi})$. This estimate can be easily seen from \cite[Theorem 2]{ihara2}.

To compute the contribution from the Gamma terms, we first note that 
\begin{align*}
	\tilde{\Gamma}_{\chi}(s) & =  -  \frac{a}{2} \frac{\Gamma'}{\Gamma} \left( \dfrac{s+1}{2}\right)  - \frac{a'}{2} \frac{\Gamma'}{\Gamma} \left( \dfrac{s}{2}\right) - r_2  \frac{\Gamma'}{\Gamma} \left( s \right) \\
	&=  \frac{n_K}{2} \gamma  \; + \frac{a}{2} \sum_{k=0}^{\infty} \;  \left( \frac{2}{s+1+2k} -   \frac{2}{2k+2} \right)\;  + \frac{a'}{2}  \sum_{k=0}^{\infty} \; \left(  \frac{2}{s+2k} - \frac{2}{1+2k}\right) \; \\
	& \;\;\;\;\;\;\; \;\;\;\; +  \; r_2 \sum_{k=0}^{\infty} \; \left( \frac{1}{s+k} - \frac{1}{1+k} \right).
\end{align*}
Here $\gamma = \lim_{n \rightarrow \infty } \left[ \sum_{k=1}^{n}\frac{1}{k} - \ln n \right] \sim 0.5772\ldots $ is the Euler constant, and $n_K = [K:\Q]$. 
Differentiating $r$ times, we get 
\begin{align*}
\tilde{\Gamma}_{\chi}^{(r)}(s) = (-1)^{r} r! \left[ \frac{(a' + r_2)}{s^{r+1}} \right.  & + a \sum_{k = 0}^{\infty}  \frac{1}{(s+1+2k)^{r+1}} \\
& \left. + a'  \sum_{k = 1}^{\infty}  \frac{1}{(s+2k)^{r+1}} + r_2  \sum_{k = 1}^{\infty}  \frac{1}{(s+k)^{r+1}} \right]. \\
\end{align*} 
Thus, by performing a computation similar to that in \cite[Lemma 5.6]{ghosh1} we obtain
\begin{align}\label{Gammafac}
		\frac{\mathfrak{G}_{\chi}(r,x)}{(x-1)} &= \tilde{\Gamma}_{\chi}^{(r)}(1) + E_2(r,x),
	\end{align}
 where $E_2(r,x) = O_{r} \left( \frac{n_K (\log x)^{r+1}}{x} \right)$, and so $\lim_{x \rightarrow \infty} E_2(r,x) = 0$. In particular,
    \begin{equation}
        \lim_{x \rightarrow \infty} \frac{\mathfrak{G}_{\chi}(r,x)}{(x-1)} = \tilde{\Gamma}_{\chi}^{(r)}(1).
    \end{equation}

 \subsection{Proof of Theorem \ref{mainthm1}} $\;$ \\    
 Putting \eqref{defnPhichi}, \eqref{Rhofac} and \eqref{Gammafac} together, we get
 \begin{equation}\label{mainarithformula}
     (-1)^{r+1}\Phi_K (\chi, r , x) = (-1)^{r} r!  \sum \frac{1}{(1- \rho)^{r+1}} + \tilde{\Gamma}_{\chi}^{(r)}(1) + E_1(r,x) + E_2(r,x). 
 \end{equation}
 Letting $x \rightarrow \infty$ and using \eqref{starklike2}, we get the arithmetic limit formula 
$$\mathcal{L}^{(r)}(1, \chi) =  \lim_{x \rightarrow \infty} (-1)^{r+1} \;  \Phi_K (\chi, r , x).$$
$\;$ \hfill $\qed$
\begin{rem}
	Note the difference in the limit formula compared with (1.8) of \cite[Theorem 1.4]{ghosh1}; in particular, the absence of the $f(r,x)$ term.
\end{rem}

We will now focus our attention on the case $K = \Q$. In the following sections, we study the moments of higher derivatives of $\mathcal{L}(s, \chi)$ at $s=1$, where $\chi$ runs over all non-principal Dirichlet characters of large prime conductors. 
 
\section{Moments of higher derivatives $\mathcal{L}^{(r)}(1, \chi)$}

Recall that we wrote  
\begin{equation*}
	 \Phi_K (\chi, r , x) =  \frac{1}{x-1} \sum \limits_{k, \; N(P)^k< \; x} k^r \left( \dfrac{x}{N(P)^k} -1 \right)  \chi(P)^k (\log N(P))^{r+1}.  
\end{equation*}
In particular, for $K= \Q$, we have 
\begin{align*}
	\Phi_{\Q} (\chi, r , x) 
	 &=  \frac{1}{x-1} \sum \limits_{k, \; p^k< \; x} k^r \left( \dfrac{x}{p^k} -1 \right)  \chi(p)^k (\log p)^{r+1}  \\[1.3ex]
	 &= \frac{1}{x-1} \sum \limits_{k, \; p^k< \; x}  \left( \dfrac{x}{p^k} -1 \right)  \chi(p^k) (\log p)(\log p^k)^{r}  \\[1.3ex]
	  &= \frac{1}{x-1} \sum \limits_{n < \; x}  \left( \dfrac{x}{n} -1 \right)  \chi(n) \Lambda(n)(\log n)^{r}.  \\
\end{align*}
From now on, we will drop the subscript $\Q$ and simply write $\Phi (\chi, r , x)$ to denote the case $K=\Q$. That is, we will write 
\begin{equation}\label{Phi_r_defn}
    \Phi (\chi, r , x)  = \frac{1}{x-1} \sum \limits_{n < \; x}  \left( \dfrac{x}{n} -1 \right)  \chi(n) \Lambda(n)(\log n)^{r} .
\end{equation}
We will first compute the $(a,b)$-th moment of $\Phi (\chi, r , x)$. Before that, we observe some bounds on the $\Lambda_{r,k}$ terms that we defined in \eqref{defnLambda}.

Recall $\Lambda_{r,k}$  is the higher derivative counterpart to $\Lambda_{k}(n)$ which was defined as 
$$ \Lambda_0(n) = 1 \text{ if } n=1, \; 0 \text{ otherwise. } \quad \Lambda_k(n) = \sum_{n = n_1 \cdots n_k} \Lambda(n_1) \cdots \Lambda(n_k) \; \text{ for } k>0.$$
We see that if $n$ has the prime factorization $n = \prod_{i=1}^r p_i^{\alpha_i}$ then $\Lambda_{k}(n)$ is the coefficient of the monomial $x_1^{\alpha_1} \cdots x_r^{\alpha_r}$ in the polynomial 
$$\left( \sum_{i=1}^{r} (\log p_i)(x_i + x_i^2 + \cdots + x_i^{\alpha_i}) \right)^k.$$
Letting $x_i = 1$ for all $i = 1 , \cdots r$, we see that
\begin{equation}
	 \Lambda_{k}(n) \leq \left( \sum_{i=1}^{r} {\alpha_i} (\log p_i)  \right)^k = (\log n)^k. 
\end{equation}
Applying the arithmetic-geometric mean inequality, we see that 
$$\prod_{i=1}^{k}\log n_i \leq \frac{(\log n )^k}{k^k} .$$
Therefore, we have
\begin{align}\label{lambdaineq1}
	\nonumber \Lambda_{r,k}(n)&=	 \sum_{n = n_1 \cdots n_k} \Lambda(n_1) \cdots \Lambda(n_k) (\log n_1)^r \cdots (\log n_k)^r\\ 
	 & \leq \frac{(\log n )^{rk}}{k^{rk}} \Lambda_{k}(n) \leq \frac{(\log n )^{(r+1)k}}{k^{rk}}.
\end{align}

We now prove the following lemma, which is a generalization of 4.2.2 and 4.2.3 of \cite{ihara2}. 
\begin{lm}\label{genOrtho}
	For $x > 1$, let $g(x,n)$ be any real-valued function and let $g_\chi(x) = \sum_{n \leq x} g(x, n) \chi(n) $. For $k \geq 1$, define   $$\lambda^{(k)}(j, x) =  \; \sum \limits_{\substack{n_1, \cdots, n_k < x \\ n_1 \cdots n_k \equiv j \; (\text{mod} \; m) }} \prod_{i=1}^k g(x, n_i), $$
 and for $k=0$ define $\lambda^{(0)} (j, x) = 1$ for $j=1$, and $0$ for $j>1$. Then we have
\begin{equation} \label{orthorel}
		\frac{1}{|X_m^{\star}|} \sum_{\chi \in X_m^{\star}} g_{\chi}(x)^a g_{\overline{\chi}}(x)^b \; = \sum_{j=1}^{m-1} \lambda^{(a)}(j,x) \lambda^{(b)}(j,x).
\end{equation}
  (Recall that $m$ here is a prime number, $a$ and $b$ are non-negative integers, and $X_m^{\star} = X_m \setminus \{ \chi_0\}$.)    
\end{lm}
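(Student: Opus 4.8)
The plan is to reduce the character average to a purely combinatorial identity by expanding the two powers into multiple sums, folding each product of character values into a single character value via complete multiplicativity, interchanging the (finite) summations so that the sum over $\chi$ sits innermost, and finally evaluating that inner sum with the orthogonality relations for Dirichlet characters modulo the prime $m$. (The hypothesis that $g$ is real‑valued plays no role in the identity itself; it is only what guarantees $g_{\overline\chi}(x)=\overline{g_\chi(x)}$, so that the left‑hand side is the average of $P^{(a,b)}(g_\chi(x))$, which is the form used later.)

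Concretely, for $a\ge 1$ I would write
\[
g_\chi(x)^a=\Bigl(\sum_{n\le x} g(x,n)\chi(n)\Bigr)^{a}=\sum_{n_1,\dots,n_a\le x}\Bigl(\prod_{i=1}^{a} g(x,n_i)\Bigr)\chi(n_1\cdots n_a),
\]
using $\chi(n_1)\cdots\chi(n_a)=\chi(n_1\cdots n_a)$; for $a=0$ the left side is $1$, matching the convention for $\lambda^{(0)}$. Writing the analogous expansion for $g_{\overline\chi}(x)^b$, multiplying the two, and summing over characters gives
\[
\sum_{\chi} g_\chi(x)^a g_{\overline\chi}(x)^b=\sum_{\substack{n_1,\dots,n_a\le x\\ n_1',\dots,n_b'\le x}}\Bigl(\prod_{i=1}^{a} g(x,n_i)\Bigr)\Bigl(\prod_{i=1}^{b} g(x,n_i')\Bigr)\sum_{\chi}\chi(N)\,\overline\chi(N'),
\]
where $N=n_1\cdots n_a$, $N'=n_1'\cdots n_b'$, and the interchange is harmless because everything in sight is a finite sum.

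The core step is then orthogonality: for the prime $m$, $\sum_{\chi\bmod m}\chi(N)\overline\chi(N')$ equals $\phi(m)=m-1$ when $\gcd(NN',m)=1$ and $N\equiv N'\pmod m$, and $0$ otherwise. Two bookkeeping remarks align this with the stated right‑hand side. First, since $m$ is prime, $\gcd(N,m)=1$ is equivalent to $m\nmid N$, i.e. to $N$ lying in one of the residue classes $1,\dots,m-1$; this is precisely why the index $j$ in $\lambda^{(k)}(j,x)$ runs over $1,\dots,m-1$, and why any tuple containing a multiple of $m$ drops out (its character value vanishes). Second, grouping the surviving pairs of tuples by the common residue $j:=N\bmod m=N'\bmod m$ turns the right side of the last display into
\[
\phi(m)\sum_{j=1}^{m-1}\Bigl(\,\sum_{\substack{n_1,\dots,n_a\le x\\ n_1\cdots n_a\equiv j\,(m)}}\prod_{i=1}^{a} g(x,n_i)\Bigr)\Bigl(\,\sum_{\substack{n_1',\dots,n_b'\le x\\ n_1'\cdots n_b'\equiv j\,(m)}}\prod_{i=1}^{b} g(x,n_i')\Bigr)=\phi(m)\sum_{j=1}^{m-1}\lambda^{(a)}(j,x)\,\lambda^{(b)}(j,x),
\]
and dividing by $\phi(m)$ yields the identity. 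The degenerate cases $a=0$ or $b=0$ fall out of the same computation: e.g. when $a=0$ one has $\lambda^{(0)}(j,x)=\mathbf 1_{j=1}$ and the identity collapses to $\tfrac{1}{\phi(m)}\sum_\chi g_{\overline\chi}(x)^b=\lambda^{(b)}(1,x)$, which is the $N'\equiv 1$ instance of the above.

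The one point requiring care — and what I expect to be the only subtlety, there being no analytic estimate involved — is the exact set of characters over which one averages. The computation above is exact when $\sum_\chi$ runs over \emph{all} $\phi(m)$ Dirichlet characters modulo $m$. If one instead averages only over the non‑principal characters, the principal character must be separated off; its contribution is $g_{\chi_0}(x)^{a}g_{\overline{\chi_0}}(x)^{b}=\bigl(\sum_{n\le x,\ \gcd(n,m)=1}g(x,n)\bigr)^{a+b}$, which only affects the error term in every intended application (where the number of averaged characters is of order $m$). I would therefore either include $\chi_0$ in the average — which is harmless — or carry this principal‑character correction explicitly through the later estimates; the rest of the argument is the routine orthogonality manipulation above.
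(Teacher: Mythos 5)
Your proof is correct and follows essentially the same route as the paper's: expand the powers into multiple sums, fold the character values together by complete multiplicativity, and apply orthogonality modulo the prime $m$ — the paper's own proof is a one-line sketch of exactly this computation. Your closing remark about which characters are averaged is well taken: as literally stated, with $X_m^{\star}$ the non-principal characters (so $|X_m^{\star}|=m-2$), the identity is exact only if one sums over all $\phi(m)=m-1$ characters and normalizes by $\phi(m)$; the paper reconciles this discrepancy only later, in the moment computation, where the contribution of $\chi_0$ is shown to be absorbable into the error term.
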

\begin{proof}
	This is a direct consequence of the orthogonality relations of Dirichlet characters. In particular, a typical term of the sum on the left-hand side of (\ref{orthorel}) looks like $$\left( \prod_{i=1}^{a} g(x, n_i)  \prod_{j=1}^{b}g(x, m_j) \right) \chi(n_1 \cdots n_a) \overline{\chi}(m_1 \cdots m_b).$$ When this is summed over all $\chi$, it has a nonzero contribution only when $(n_1 \cdots n_a) \equiv (m_1 \cdots m_b) $ (modulo $m$) and hence we have our result.
\end{proof}
\begin{pr}\label{momchigen}
For each pair $(a,b)$ of non-negative integers, each $r \geq 0$, and each $x \geq m$, we have
\begin{equation}
	\frac{1}{|X_m|} \; \sum_{\chi \in X_m}P^{(a,b)} \; \left( \Phi( \chi, r, x) \right) = \; {\mu}^{(a,b)}(r) + O_{a,b} \left( \frac{(\log x)^{(r+1)(a+b) + 2}}{m}\right).
\end{equation}
\end{pr}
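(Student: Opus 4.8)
The plan is to expand the $(a,b)$-th power of $\Phi(\chi,r,x)$ using its Dirichlet-series expression \eqref{Phi_r_defn}, apply the orthogonality Lemma~\ref{genOrtho}, and then show the diagonal term $j=1$ produces the main term $\mu^{(a,b)}(r)$ while the off-diagonal terms $j>1$ contribute only the error $O((\log x)^{(r+1)(a+b)+2}/m)$.

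More precisely, I would set $g(x,n) = \frac{1}{x-1}\left(\frac{x}{n}-1\right)\Lambda(n)(\log n)^r$ for $n<x$ (and $0$ otherwise), so that $g_\chi(x) = \Phi(\chi,r,x)$ in the notation of Lemma~\ref{genOrtho}. Then Lemma~\ref{genOrtho} gives
\begin{equation*}
\frac{1}{|X_m^\star|}\sum_{\chi\in X_m^\star} P^{(a,b)}(\Phi(\chi,r,x)) = \sum_{j=1}^{m-1}\lambda^{(a)}(j,x)\,\lambda^{(b)}(j,x),
\end{equation*}
and since $|X_m| = |X_m^\star|$ here, it suffices to analyze the right-hand side. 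First I would treat the diagonal term $j=1$. For $x\geq m$, the condition $n_1\cdots n_a\equiv 1\pmod m$ with each $n_i<x$ still allows $n_1\cdots n_a>1$ but equal to $1$ modulo $m$; however the genuinely main contribution comes from $n_1\cdots n_a$ literally equal to... no — rather, I would compare $\lambda^{(a)}(1,x)$ to the "ideal" sum $\sum_{n_1\cdots n_a = j}$ for all $j$. The cleaner route: write $\lambda^{(a)}(j,x) = \sum_{\substack{n_1\cdots n_a\equiv j\,(m)\\ n_i<x}}\prod g(x,n_i)$, and note that $\sum_{j\geq 1}\big(\sum_{n_1\cdots n_a = j, n_i<x}\prod g(x,n_i)\big)\big(\sum_{n_1\cdots n_b=j,n_i<x}\prod g(x,n_i)\big)$ is, up to the truncation $n_i<x$ and the weights $\frac{x}{n_i}-1$ versus $\frac{1}{n_i}$, exactly the truncation of $\mu^{(a,b)}(r) = \sum_j \Lambda_{r,a}(j)\Lambda_{r,b}(j)/j^2$. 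So the strategy splits into: (i) show the "true diagonal" $\sum_{j}(\cdots)(\cdots)$ equals $\mu^{(a,b)}(r) + O((\log x)^{\cdots}/m)$-ish by controlling the tail $n_i>\sqrt x$ or so and the discrepancy between $\frac{1}{x-1}(\frac{x}{n}-1)$ and $\frac1n$; (ii) show that the full $\lambda^{(a)}(1,x)$ differs from this true-$j=1$ sum by accounting for products $\equiv 1\pmod m$ that are $>1$; (iii) bound all off-diagonal $j\geq 2$.

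The key estimates I would use are the bound \eqref{lambdaineq1}, $\Lambda_{r,k}(n)\leq (\log n)^{(r+1)k}/k^{rk}$, together with $\sum_{n<x}\Lambda_{r,k}(n)/n \ll (\log x)^{(r+1)k+1}$ (obtained by partial summation from $\sum_{n<x}\Lambda_{r,k}(n)\ll x(\log x)^{(r+1)k}$, itself following from $\Lambda_{r,k}(n)\leq (\log n)^{(r+1)k}$ being essentially constant on dyadic ranges and supported on $k$-fold products of prime powers). For the off-diagonal: if $n_1\cdots n_a\equiv n_1'\cdots n_b'\pmod m$ with not all factors forcing true equality, then in the regime $x\geq m$ one shows $\sum_{j=2}^{m-1}\lambda^{(a)}(j,x)\lambda^{(b)}(j,x)$ is dominated by $\frac{1}{m}\big(\sum_{n<x}\Lambda_{r,a}\text{-type sum}\big)\big(\cdots\big)$ type bounds — roughly, each residue class mod $m$ is hit with "density" $1/m$, and one pays $(\log x)$'s from the von Mangoldt weights. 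This mirrors exactly 4.2.2–4.2.3 of \cite{ihara2}, with $\Lambda(n)$ replaced by $\Lambda(n)(\log n)^r$, which only inflates the powers of $\log x$ by a factor $r$ per variable, producing the exponent $(r+1)(a+b)+2$ (the "$+2$" coming from the two partial-summation logs, one from each of the $a$-product and $b$-product chains, or from a single careful Abel summation in the combined sum).

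The main obstacle I expect is step (ii)–(iii): carefully bounding the off-diagonal and the "non-principal part of the diagonal" uniformly, since $\lambda^{(a)}(j,x)$ with $x$ as large as $m$ (or larger, in the later application $x$ will be a power of $m$) can in principle be large for many $j$, and one must show the weighted count of $a$-tuples with prescribed product mod $m$ is within $O(1/m)$ of uniform \emph{after} weighting by $\prod \Lambda(n_i)(\log n_i)^r / (x-1)^a \cdot \prod(x/n_i - 1)$. The honest way is to bound $\lambda^{(a)}(j,x)$ crudely by $\sum_{\substack{n_1\cdots n_a\equiv j(m)}} \prod \frac{1}{n_i}\Lambda(n_i)(\log n_i)^r \cdot (1 + O(\cdot))$ using $\frac{1}{x-1}(\frac{x}{n}-1)\leq \frac{1}{n}$ and then invoke a large-sieve-free elementary counting: for each fixed choice of $n_2,\dots,n_a$, the number of $n_1<x$ with $n_1\equiv j(n_2\cdots n_a)^{-1}\pmod m$ and $\Lambda(n_1)\neq 0$ is $O(x/m + 1)$ worth of prime-power weight, and summing the resulting geometric-type series in $n_2,\dots,n_a$ against $(\log x)^{(r+1)}$-weights gives the claim once $x\geq m$ forces the "$+1$" to be absorbed. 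I would present this as a lemma generalizing 4.2.2–4.2.3 of \cite{ihara2} and then the Proposition follows by combining the diagonal main term with these error bounds, multiplied out over the $a+b$ chains.
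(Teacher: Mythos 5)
Your overall strategy --- apply Lemma \ref{genOrtho} with $g(x,n)=\frac{1}{x-1}\left(\frac{x}{n}-1\right)\Lambda(n)(\log n)^r$, extract a main term from tuples whose products are literally equal, and bound the congruent-but-unequal contributions using the fact that each residue class mod $m$ is hit with density roughly $1/m$ --- is exactly the paper's, and the key inputs you name (the bound $\Lambda_{r,k}(n)\le(\log n)^{(r+1)k}$ from \eqref{lambdaineq1}, one extra $\log$ per chain from a harmonic/partial summation, hence the exponent $(r+1)(a+b)+2$) are the ones the paper uses.

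However, steps (ii)--(iii) as you state them contain a genuine error: you propose to treat only the residue class $j=1$ as carrying the main term and to bound $\sum_{j=2}^{m-1}\lambda^{(a)}(j,x)\lambda^{(b)}(j,x)$ wholesale as $O\bigl((\log x)^{\cdots}/m\bigr)$. This cannot work, because it conflates the residue class $j \pmod m$ with the integer value of the product $n_1\cdots n_k$. Since $\Lambda_{r,k}(1)=0$ for $k\ge 1$, the class $j=1$ contributes essentially nothing to $\mu^{(a,b)}(r)=\sum_{j\ge1}\Lambda_{r,a}(j)\Lambda_{r,b}(j)/j^2$; for $a,b\ge1$ the entire main term (a fixed positive constant, not $O(1/m)$) sits inside the classes $j=2,\dots,m-1$, up to a tail $\sum_{j\ge m}\ll(\log m)^{(r+1)(a+b)+1}/m$. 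The split must be made \emph{inside each} residue class: writing $\lambda^{(k)}(j,x)=\sum_{l\ge0}L^{(k)}(j+lm,x)$, where $L^{(k)}(N,x)$ collects the tuples with $n_1\cdots n_k=N$ exactly, the $l=0$ term is the main term, with $L^{(k)}(j,x)=\Lambda_{r,k}(j)/j+O\bigl((\log m)^{(r+1)k}/m\bigr)$ (here $x\ge m>j$ makes the truncation $n_i<x$ automatic and $\frac{1}{(x-1)^k}\prod_i\bigl(\frac{x}{n_i}-1\bigr)=\frac{1}{j}+O\bigl(\frac{1}{x}\bigr)$), while the $l\ge1$ terms satisfy $L^{(k)}(N,x)\le\Lambda_{r,k}(N)/N\le k^k(\log x)^{(r+1)k}/N$ and sum to $O\bigl((\log x)^{(r+1)k+1}/m\bigr)$ via $\sum_{l\le x^k/m}\frac{1}{lm}$. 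Your ``cleaner route'' paragraph does gesture at the correct identification of the true diagonal, so the missing piece is organizational rather than a new idea, but the plan as literally written in (ii)--(iii) and in your off-diagonal paragraph would discard the main term.
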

\begin{proof}
	 Applying Lemma \ref{genOrtho} with $g(x,n) = \frac{1}{(x-1)} \left( \frac{x}{n} - 1 \right) \Lambda(n) (\log n)^r $ we get, 
	\begin{equation}
			\frac{1}{|X_m^{\star}|} \sum_{\chi \in X_m^{\star} } \; P^{(a,b)} ( \Phi(\chi, r, x)) = \sum_{j=1}^{m-1} \lambda^{(a)}(j,x) \; \lambda^{(b)}(j,x),
	\end{equation}
where \begin{align}
\nonumber	\lambda^{(k)}(j,x) \; &= \sum_{\substack{n_1, \cdots , n_k < x \\ n_1 \cdots n_k \equiv j \; (\text{mod } m)}}  \prod_{i=1}^k \frac{1}{(x-1)} \left( \frac{x}{n_i} - 1 \right) \Lambda(n_i) (\log n_i)^r \\[1.3ex]
&= \frac{1}{(x-1)^k} \sum_{l=0}^{[(x^k - j)/m]}   \sum_{\substack{n_1, \cdots , n_k < x \\ n_1 \cdots n_k = j+l m}}  \prod_{i=1}^k  \left( \frac{x}{n_i} - 1 \right) \Lambda(n_i) (\log n_i)^r. 
\end{align}
Let us write 
\begin{equation}\label{sumL_k}
    \lambda^{(k)}(j,x) = \sum_{l=0}^{[(x^k - j)/m]} L^{(k)} (j + lm, x).
\end{equation}
We will show that the net contribution coming from all $l>0$ terms together is small. For this, we note that $L^{(k)}(N,x) \neq 0$ only when $N<x^k$ and in this case, 
\begin{align*}
L^{(k)}(N, x) & \leq \frac{1}{N} \sum_{\substack{n_1, \cdots , n_k < x \\ n_1 \cdots n_k =N}}  \left( \prod_{i=1}^{k} \Lambda(n_i)(\log n_i)^r \right) 
 \leq  \frac{1}{N} \Lambda_{r,k}(N)  \leq k^k \frac{(\log x)^{(r+1)k}}{N}.
\end{align*}
The last inequality follows from \eqref{lambdaineq1} and $N<x^k$.
Therefore, we have 
\begin{align*}
\sum_{l=1}^{[(x^k - j)/m]} L^{(k)} (j + lm, x) &< k^k \frac{(\log x)^{(r+1)k}}{m} \left( 1 + \frac{1}{2} + \cdots + \frac{1}{[x^k/m]}\right)\\[1.3ex]
&= \; O\left(\frac{(\log x)^{(r+1)k+1}}{m} \right).
\end{align*}
For the main term, we first prove 
\begin{equation}\label{xbyni}
\frac{1}{(x-1)^k} \prod_{i=1}^{k} \left( \frac{x}{n_i} - 1 \right) = \frac{1}{j} + O \left( \frac{1}{x} \right), \;\; \text{here } j=n_1 \cdots n_k. 
\end{equation}
We first note that for $x>0$ and for any integers $u, v \geq 1$, we have $$(x-u)(x-v) \geq (x-1)(x-uv).$$ 
Generalizing, 
$$ (x-1)^k \geq (x-n_1) \cdots (x-n_k) \geq (x-1)^{k-1}(x-n_1 \cdots n_k).$$
Thus, for $n_i \geq 1$ and $n_1 \cdots n_k = j$, 
\begin{align*}
	\frac{1}{(x-1)^k} \prod_{i=1}^{k} \left( \frac{x}{n_i} - 1 \right) & \geq \frac{ \prod_{i=1}^{k} (x - n_i)}{(x-1)^k \; j }    \geq \frac{1}{(x-1)} \frac{x-j}{j}, \;\;\; \text{ and so, }\\[1.3ex]
 \; \frac{1}{j} - \frac{1}{(x-1)^k} \prod_{i=1}^{k} \left( \frac{x}{n_i} - 1 \right) &\leq \frac{j-1}{j(x-1)}.
\end{align*}
On the other hand,
\begin{align*}
	\frac{1}{(x-1)^k} \prod_{i=1}^{k} \left( \frac{x}{n_i} - 1 \right) &= 	\frac{1}{(x-1)^k} \frac{ \prod_{i=1}^{k} (x-n_i)}{j} \leq \frac{1}{j}.
\end{align*}
Therefore,
\begin{equation}
\frac{1}{(x-1)^k} \prod_{i=1}^{k} \left( \frac{x}{n_i} - 1 \right) = \frac{1}{j} + O \left( \frac{1}{x} \right).
\end{equation}
For the $l=0$ term of \eqref{sumL_k}, we have
\begin{equation*}
	L^{(k)}(j,x) = \frac{1}{(x-1)^k}\sum_{\substack{n_1, \cdots , n_k < x \\ n_1 \cdots n_k = j}}  \prod_{i=1}^k  \left( \frac{x}{n_i} - 1 \right) \Lambda(n_i) (\log n_i)^r . 
\end{equation*}
Since $j<m$, if we choose $x \geq m$, the condition $n_1, \cdots, n_k < x$ is automatic. Thus, using \eqref{xbyni} we have
\begin{equation}
 L^{(k)}(j,x) = \frac{ \Lambda_{r,k}(j)}{j} + O \left( \frac{(\log m)^{(r+1)k}}{m} \right).  
\end{equation}
Hence 
$$	\lambda^{(k)}(j,x) = \frac{\Lambda_{r,k}(j)}{j} + O \left( \frac{(\log x)^{(r+1)k+1}}{m} \right),$$
and so,
\begin{align*}
\frac{1}{|X_m^{\star}|} \sum_{\chi \in X_m^{\star} } \; P^{(a,b)} ( \Phi(\chi, r, x)) &=     \sum_{j=1}^{m-1} \lambda^{(a)}(j,x) \; \lambda^{(b)}(j,x) \\
&= \sum_{j=1}^{m-1} \frac{ \Lambda_{r,a}(j) \;  \Lambda_{r,b}(j)}{j^2} + O \left( \frac{(\log x)^{(r+1)(a+b) + 2}}{m^2}\right).
\end{align*}
Finally, we get the proposition from the following two observations. First, 
\begin{align*}
	\sum_{j \geq m} \frac{\Lambda_{r,a}(j) \; \Lambda_{r,b}(j)}{j^2} & \leq \frac{1}{(a^a b^b)^r} \sum_{j \geq m} \frac{(\log j)^{(r+1)(a+b)}}{j^2} = \; O \left( \frac{(\log m)^{(r+1)(a+b) +1}}{m}\right),
\end{align*} 
where the first inequality follows from \eqref{lambdaineq1}.
Second, so far we have excluded $\chi_0$ from our computations and have worked with $X_m^{\star}$. This does not affect the argument, because for $\chi = \chi_0$, $\Phi(\chi_0,r,x) = \Phi_{\Q}(r,x) = O((\log x)^{r+1})$. For an explanation, see \cite[Remark 5.3]{ghosh1}. Thus, if we include the principal character in proving the theorem, it will affect the result by $O \left(\frac{(\log x)^{(r+1)(a+b)}}{m}\right)$, which is smaller than the error term. 
\end{proof} 
We are now ready to prove Theorem \ref{mainthm2}.
\subsection{Proof of Theorem \ref{mainthm2}}
Note that for $K=\Q$, equation \eqref{starklike} takes the form 
\begin{equation}
	\dfrac{L'(s, \chi)}{L(s, \chi)} = - \frac{1}{2} \log \frac{q}{\pi} - \frac{1}{2} \frac{\Gamma'}{\Gamma} \left( \dfrac{s+a}{2}\right) + B(\chi) + \sum_{\rho} \left( \dfrac{1}{s - \rho} + \dfrac{1}{\rho} \right).
\end{equation}
For example, see p. 83 of \cite{dave}. Here $a = 0$ (respectively, $a=1$) if $\chi$ is even (resp. odd) and $B(\chi) = \xi'(0, \chi) / \xi(0, \chi)$. The sum is over all nontrivial zeros $\rho$ of $L(s, \chi)$, i.e., zeros in the critical strip.  \\
Thus, carrying out computations similar to those in the proof of Theorem \ref{mainthm1}, under GRH, we get
\begin{equation}\label{LchiQ}
	\mathcal{L}^{(r)}(1, \chi) = (-1)^{r+1}\Phi(\chi, r, x) + O \left( \frac{\log m \; (\log x)^r}{\sqrt{x}} + \frac{(\log x)^{r+2}}{x}\right). 
\end{equation}
 Substituting $x=m^2$ into \eqref{LchiQ} and applying Proposition \ref{momchigen} completes the proof. We get
 
 \begin{align*}
&	\frac{1}{|X_m|} \sum_{\chi \in X_m} P^{(a,b)}(\mathcal{L}^{(r)}(1,\chi)) \\
& \;\;\; = 	\frac{1}{|X_m|} \sum_{\chi \in X_m} P^{(a,b)} \left( (-1)^{r+1} \Phi(\chi, r, m^2) \right) + O \left( \frac{(\log m)^{r(a+b)}}{m^{a+b}}\right) \\[1.3ex]
	& \;\;\; = (-1)^{(r+1)(a+b)}\frac{1}{|X_m|} \sum_{\chi \in X_m} P^{(a,b)} \left( \Phi(\chi, r, m^2) \right) + O \left( \frac{(\log m)^{r(a+b)}}{m^{a+b}}\right) \\[1.3ex]
	& \;\;\; =  (-1)^{(r+1)(a+b)} \mu^{(a,b)}(r) + O\left( \frac{(\log m)^{(r+1)(a+b) + 2}}{m} \right).
\end{align*}
$\;$ \hfill $\qed$ \vspace{2mm}\\

To obtain an unconditional version of the above theorem, we need to take a closer look at estimating the $\mathfrak{Z}_{\chi}(r,x)$ term. We have
\begin{align} \label{zerosum2}
\nonumber		&  \left| \sum_{\rho} \left( \sum_{k=0}^{r} (-1)^{r+k} k! \binom{r}{k} \left[ \frac{1}{(\rho-1)^{k+1}} - \frac{1}{\rho^{k+1}} \right] \frac{ x^{\rho} (\log x)^{r-k}}{(x-1)}\right) \right| \\ \nonumber	
       \leq \; &  \frac{(r!)2^r (\log x)^{r}}{(x-1)} \sum_{\rho = \beta + i \gamma}  \sum_{k=0}^{r}  \left|  \frac{1}{(\rho-1)^{k+1}} - \frac{1}{\rho^{k+1}}   \right| x^{\beta}   \\
       \leq \; &  \frac{(r!)2^r (\log x)^{r}}{(x-1)} \sum_{\rho = \beta + i \gamma}  \sum_{k=0}^{r}  \left(  \frac{1}{|\rho-1|^{k+1}} + \frac{1}{|\rho|^{k+1}}   \right) x^{\beta}.
	\end{align}
    We now state several results used to estimate (\ref{zerosum2}). We will also prove several lemmas that depend on the behavior and estimates of zeros of $L(s, \chi)$ in the critical strip. To begin with, we will use the following two well-known results, due to Gronwall, Titchmarsh, Siegel etc.; see \cite{dave}, \S14, 16 and 21. \\

\textbf{Theorem (A)} There exists an absolute and effective positive constant $c$ such that if $\rho = \beta + i \gamma$ is a nontrivial zero of $L(s, \chi)$ with $|\gamma| \leq T$, $T \geq 1$, then either:
	$$ \text{min} (1- \beta, \beta) \; > \; \dfrac{c}{\log (mT)},$$
		or $\chi = \chi_1$ and either $\rho = \beta_1$ or $\rho = 1-\beta_1$, where $\beta_1$ is a real simple zero satisfying $ \beta_1 > \frac{1}{2} $ and $1 - \beta_1 \gg m^{-\e}$. \\

\textbf{Theorem (B)} 
	Let $Z_\chi$ be the set of nontrivial zeros of $L(s, \chi)$. Then 
	$$\# \{  \beta + i \gamma \in Z_\chi \; : \; |\gamma - T| < 1 \} \ll \log(m(T+2)).$$

\begin{lm}\label{sublem1}
	We have
\begin{equation}
\sideset{}{'}\sum_{|\gamma| \leq 1} \sum_{k=0}^{r}  \left(  \frac{1}{|\rho-1|^{k+1}} + \frac{1}{|\rho|^{k+1}}   \right) x^{\beta}  	\ll (r+1) x (\log m)^{r+2},
\end{equation}
where $ \; \sideset{}{'}\sum$ is the sum over all $\rho$ excluding the possible exceptional zero. \\
\end{lm}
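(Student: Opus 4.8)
The plan is to split the sum over low-lying zeros $\rho = \beta + i\gamma$ with $|\gamma| \le 1$ according to the location of $\rho$ dictated by Theorem (A): for every such zero (other than the exceptional one that has been excluded by the $\sideset{}{'}\sum$ notation) we have $\mathrm{Min}(1-\beta,\beta) > c/\log m$, hence both $|\rho| \ge \beta > c/\log m$ and $|\rho - 1| \ge 1-\beta > c/\log m$. This immediately gives, for each $k$ with $0 \le k \le r$,
\begin{equation*}
\frac{1}{|\rho - 1|^{k+1}} + \frac{1}{|\rho|^{k+1}} \ll \left(\frac{\log m}{c}\right)^{k+1} \ll (\log m)^{r+1},
\end{equation*}
so that summing over $0 \le k \le r$ contributes a factor $(r+1)(\log m)^{r+1}$. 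One also bounds trivially $x^{\beta} \le x$ since $\beta < 1$.

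The remaining ingredient is to count the zeros in the strip $|\gamma| \le 1$. Here I would apply Theorem (B) with $T = 0$ (or $T=1$): the number of non-trivial zeros with $|\gamma| \le 1$ is $\ll \log(m(0+2)) \ll \log m$. Multiplying the per-zero bound $(r+1)(\log m)^{r+1} \cdot x$ by the number of zeros $\ll \log m$ yields
\begin{equation*}
\sideset{}{'}\sum_{|\gamma| \le 1} \sum_{k=0}^{r}\left(\frac{1}{|\rho-1|^{k+1}} + \frac{1}{|\rho|^{k+1}}\right) x^{\beta} \ll (r+1)\, x\, (\log m)^{r+2},
\end{equation*}
which is exactly the claimed bound.

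The step I expect to be the only real subtlety is handling the exceptional (Siegel) zero $\beta_1$ correctly: if it were included, the term $1/(1-\beta_1)^{k+1}$ could be as large as $m^{(k+1)\e}$ by Theorem (A), which would blow up the estimate. Since the statement explicitly excludes it via $\sideset{}{'}\sum$, this is not an obstacle here, but I would remark that its contribution must be (and will later be) dealt with separately. Beyond that, everything is a routine combination of the lower bound on $\mathrm{Min}(1-\beta,\beta)$ from Theorem (A) and the zero-counting bound from Theorem (B), with the $\beta < 1$ estimate $x^\beta \le x$ absorbing the $x$ factor; no delicate cancellation is needed.
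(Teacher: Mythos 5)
Your proposal is correct and follows essentially the same route as the paper: apply Theorem (A) with $T=1$ to get $|\rho|,|\rho-1| \gg 1/\log m$ for non-exceptional zeros (hence each inner sum is $\ll (r+1)(\log m)^{r+1}$), bound $x^{\beta} \le x$, and invoke the zero-count $\ll \log m$ for $|\gamma|\le 1$ from Theorem (B). Your added remark about why the exceptional zero must be excluded is a sensible observation, and indeed the paper handles $\chi_1$ separately later.
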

\begin{proof}
		For  $|\gamma| \leq 1$, by Theorem (A) with $T=1$ and with $\rho$ not being exceptional, we see that $|\rho| \geq |\beta| \gg \frac{1}{\log m}$. Similarly, $|1 - \rho|  \gg \frac{1}{\log m}$. Hence,
	$$\frac{1}{| \rho |^{k+1}} \ll (\log m)^{k+1}, \;\;\;  \frac{1}{| 1 - \rho |^{k+1}} \ll (\log m)^{k+1}.$$
	Therefore,
	\begin{align*}
		& \sideset{}{'}\sum_{|\gamma| \leq 1} \sum_{k=0}^{r}  \left(  \frac{1}{|\rho-1|^{k+1}} + \frac{1}{|\rho|^{k+1}}   \right) x^{\beta}
	\; \ll (r+1) x(\log m)^{r+1}  \;\left( \sum_{|\gamma| \leq 1} {'} \; 1
		\; \right)   \ll \; (r+1) x (\log m)^{r+2}.
	\end{align*}
\end{proof}
\begin{lm}\label{sublem2} For $T \geq 1$, we have
	\begin{equation} 
			\sum_{|\gamma| > T } \sum_{k=0}^{r}  \left(  \frac{1}{|\rho-1|^{k+1}} + \frac{1}{|\rho|^{k+1}}   \right) x^{\beta}  	\ll \frac{(r+1) x (\log mT)}{T}.
	\end{equation}
\end{lm}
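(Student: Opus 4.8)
The plan is to collapse the double sum into a single zero-sum of shape $\sum_{|\gamma|>T}|\gamma|^{-2}$ and then control the latter by a dyadic decomposition together with the density bound of Theorem (B). For a non-trivial zero $\rho=\beta+i\gamma$ one has $|\rho|\ge|\gamma|$ and $|\rho-1|\ge|\gamma|$, and since $0<\beta<1$ and $x>1$ we may replace $x^\beta$ by $x$ throughout. Hence for each $k$ with $1\le k\le r$,
$$\frac{1}{|\rho-1|^{k+1}}+\frac{1}{|\rho|^{k+1}}\le\frac{2}{|\gamma|^{k+1}}\le\frac{2}{|\gamma|^2},$$
so the contribution of the terms $1\le k\le r$ is at most $2rx\sum_{|\gamma|>T}|\gamma|^{-2}$. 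The term $k=0$ is the one place where this crude bound is too wasteful (it would leave the divergent sum $\sum_\rho|\gamma|^{-1}$): there one must retain the cancellation, i.e. go back to the penultimate line of \eqref{zerosum2} and use
$$\left|\frac{1}{\rho-1}-\frac{1}{\rho}\right|=\frac{1}{|\rho|\,|\rho-1|}\le\frac{1}{|\gamma|^2},$$
so that the $k=0$ term also contributes $\ll x\sum_{|\gamma|>T}|\gamma|^{-2}$.

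It then remains to prove $\sum_{|\gamma|>T}|\gamma|^{-2}\ll\frac{\log(mT)}{T}$. I would split the range $|\gamma|>T$ into dyadic blocks $2^{j}T<|\gamma|\le 2^{j+1}T$, $j\ge 0$. By Theorem (B) — the number of zeros with $|\gamma-U|<1$ is $\ll\log(m(U+2))$ — summing over $\ll 2^{j}T$ unit subintervals shows the $j$-th block contains $\ll 2^{j}T\,\log\!\big(m\,2^{j+1}T\big)$ zeros, while on that block $|\gamma|^{-2}\le(2^{j}T)^{-2}$. Thus the $j$-th block contributes $\ll (2^{j}T)^{-1}\log\!\big(m\,2^{j+1}T\big)$, and since $\log(m2^{j+1}T)\le\log(mT)+(j+1)\log 2$, summing the resulting geometric-type series over $j\ge0$ gives $\ll\frac{\log(mT)}{T}$.

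Combining the two steps, each of the $r+1$ values of $k$ contributes $\ll x\log(mT)/T$, so the full sum is $\ll(r+1)\,x\log(mT)/T$, which is the assertion. (As in Lemma \ref{sublem1} one should, strictly speaking, exclude the possible Siegel zero from the sum, but being real it has $\gamma=0$ and so never lies in the range $|\gamma|>T\ge1$, so this costs nothing.)

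The main obstacle is exactly the $k=0$ term: the triangle-inequality bound $\big|\tfrac{1}{\rho-1}-\tfrac1\rho\big|\le\tfrac1{|\rho-1|}+\tfrac1{|\rho|}$ used to pass to the last line of \eqref{zerosum2} is lossy here because $\sum_\rho|\gamma|^{-1}$ diverges; one genuinely needs the cancellation $\tfrac{1}{\rho-1}-\tfrac1\rho=\tfrac{1}{\rho(\rho-1)}$ to gain the extra $|\gamma|^{-1}$. Everything else — the dyadic summation and the application of Theorem (B) — is routine.
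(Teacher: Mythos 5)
Your argument follows the same route as the paper's: reduce the double sum to $x\sum_{|\gamma|>T}|\gamma|^{-2}$ and then estimate that tail by covering the range $|\gamma|>T$ with short intervals and applying Theorem (B); your dyadic blocks and the paper's decomposition $\sum_{j\ge[T]}j^{-2}\sum_{|\gamma-(j+1)|<1}1$ are interchangeable and give the same bound $\ll T^{-1}\log(mT)$. The one substantive difference is your handling of $k=0$, and there you have put your finger on a real defect rather than created one: the paper's first displayed inequality in its proof, namely $\ll (r+1)x\sum_{|\gamma|>T}\gamma^{-2}$, is not justified for $k=0$, since the summand $\bigl(|\rho-1|^{-1}+|\rho|^{-1}\bigr)x^{\beta}$ is only $O(x/|\gamma|)$ and $\sum_{|\gamma|>T}|\gamma|^{-1}$ diverges (the number of zeros up to height $U$ grows like $U\log(mU)$), so the left-hand side of the lemma as literally written is actually infinite. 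Your repair --- returning to the penultimate line of \eqref{zerosum2} and exploiting the cancellation $\bigl|\tfrac{1}{\rho-1}-\tfrac{1}{\rho}\bigr|=\tfrac{1}{|\rho|\,|\rho-1|}\le|\gamma|^{-2}$ --- is exactly what is needed, and it is the quantity with the differences $\bigl|(\rho-1)^{-(k+1)}-\rho^{-(k+1)}\bigr|$ that is actually used downstream (e.g.\ in Proposition \ref{zerosum1}(2)), so the lemma should be restated in that form, at least for the $k=0$ term. Your closing remark that the possible exceptional zero is real and hence never enters the range $|\gamma|>T\ge1$ is also correct.
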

\begin{proof}
	Note that
\begin{align*}
  & \sum_{|\gamma| > T } \sum_{k=0}^{r}  \left(  \frac{1}{|\rho-1|^{k+1}} + \frac{1}{|\rho|^{k+1}}   \right) x^{\beta} 
 \;\;\; \ll (r+1) x \sum_{|\gamma| > T } \; \frac{1}{\gamma^2} \\[1.5ex]
  \ll \; & (r+1) x \; \sum_{j = [T]}^{\infty} \; \frac{1}{j^2} \sum_{|\gamma - (j+1)|<1  } \; 1 
 \;\;\; \ll (r+1) x \; \sum_{j = [T]}^{\infty} \; \frac{\log (m(j+3))}{j^2} \\[1.3ex]
 \ll \; & \frac{(r+1) x (\log mT)}{T} .
\end{align*}
\end{proof}
The following result is part of the proof of a sublemma (5.4.4) of \cite{ihara2}. We record it here as a lemma and, for completeness, also include the proof. 
\begin{lm}\label{sublem3}
	For $T \geq 2$ and $x \geq (mT)^6$, we have 
		\begin{equation} 
			\sum_{\chi \in X_m } \; \sideset{}{'} \sum_{\substack{ \rho \in Z_{\chi} \\ |\gamma | \leq T}} x^{\beta} \ll x (\log x)^{14}.
	\end{equation}
\end{lm}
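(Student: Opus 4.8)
The plan is to reduce this weighted zero-sum to a zero-density estimate for Dirichlet $L$-functions, following the argument behind sublemma 5.4.4 of \cite{ihara2}. For $0 \le \sigma \le 1$ set
$$ N_m(\sigma,T) \;=\; \sum_{\chi \in X_m}\,\#\bigl\{\,\rho = \beta+i\gamma \in Z_\chi \ :\ \beta \ge \sigma,\ |\gamma| \le T\,\bigr\}, $$
so that $N_m(1,T)=0$, while $N_m(0,T) \ll mT\log(mT)$ by Theorem (B) summed over the at most $m-1$ characters in $X_m$ and over the unit subintervals of $[-T,T]$. For a single zero with $\beta \le 1$ one has the elementary bound $x^\beta \le x^{1/2} + (\log x)\int_{1/2}^{1} x^\sigma\,\mathbf 1[\beta \ge \sigma]\,d\sigma$ (an equality for $\beta \ge 1/2$, since $\int_{1/2}^{\beta}x^\sigma\,d\sigma = (x^\beta-x^{1/2})/\log x$). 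Summing this over $\chi \in X_m$ and over the non-exceptional zeros with $|\gamma|\le T$ gives
$$ \sum_{\chi \in X_m}\ \sideset{}{'}\sum_{\substack{\rho\in Z_\chi\\ |\gamma|\le T}} x^\beta \ \ll\ x^{1/2}\,N_m(0,T)\ +\ (\log x)\int_{1/2}^{1} x^{\sigma}\,N_m(\sigma,T)\,d\sigma. $$

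The first term is harmless: $x^{1/2}N_m(0,T) \ll x^{1/2}\,mT\log(mT)$, and since $x \ge (mT)^6$ we have $mT \le x^{1/6}$ and $\log(mT) \le \log x$, so this is $\ll x^{2/3}\log x \ll x$. For the integral I would invoke a Montgomery-type zero-density estimate (see e.g. \cite{dave}): there are absolute constants $c$ and $A$, with $c \le 6$ and $A \le 13$, such that
$$ N_m(\sigma,T) \ \ll\ (mT)^{c(1-\sigma)}(\log mT)^{A}\qquad\bigl(\tfrac12 \le \sigma \le 1\bigr). $$
Using $x \ge (mT)^6$ once more, $(mT)^{c(1-\sigma)} \le x^{c(1-\sigma)/6}$, and since $c \le 6$ the exponent $\sigma + \tfrac{c}{6}(1-\sigma) = \tfrac{c}{6} + \sigma\bigl(1-\tfrac{c}{6}\bigr)$ is a non-decreasing function of $\sigma$ on $[\tfrac12,1]$ with maximum value $1$ at $\sigma=1$. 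Hence $x^{\sigma}N_m(\sigma,T) \ll x(\log mT)^{A}$ throughout the range, and
$$ (\log x)\int_{1/2}^{1} x^{\sigma}N_m(\sigma,T)\,d\sigma \ \ll\ (\log x)(\log mT)^{A}\,x \ \ll\ x(\log x)^{A+1}\ \ll\ x(\log x)^{14}, $$
where I used $\log(mT) \le \log x$ (again from $x \ge (mT)^6$) and $A+1 \le 14$. The exceptional zero removed by $\sideset{}{'}\sum$ only helps; in fact $\log x \ge 6\log m$ already forces $x^{\beta_1}\ll x$, so its removal is immaterial here.

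The hard part is the zero-density input: one needs the bound for $N_m(\sigma,T)$ uniformly for $\tfrac12 \le \sigma \le 1$ with an explicit log-power $A$ small enough that $A+1 \le 14$, and with multiplicative constant $c \le 6$ in the exponent so that the hypothesis $x \ge (mT)^6$ prevents $x^{\sigma}(mT)^{c(1-\sigma)}$ from ever exceeding $x$ — this is precisely where the numbers $14$ and $6$ in the statement come from. Such an estimate itself rests on the large-sieve inequality together with a mean-value bound for Dirichlet polynomials supported near the zeros (Montgomery's method); everything else above is routine, split into the two ranges $\sigma < \tfrac12$ (trivial, absorbed into the $N_m(0,T)$ term) and $\tfrac12 \le \sigma \le 1$ (the density estimate).
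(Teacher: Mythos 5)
Your overall strategy is the same as the paper's: convert the sum over zeros into an integral against the combined zero-counting function $N_m(\sigma,T)$, dispose of the range where $\beta$ is bounded away from $1$ using the crude count $N_m(0,T)\ll mT\log(mT)$, and treat $\beta$ near $1$ with a Montgomery-type density estimate, using $x\ge(mT)^6$ to keep $x^{\sigma}(mT)^{c(1-\sigma)}\ll x$. The identity $x^{\beta}=x^{1/2}+(\log x)\int_{1/2}^{\beta}x^{\sigma}\,d\sigma$ is a clean substitute for the paper's Stieltjes integration by parts, and your handling of the low-$\sigma$ range and of the exceptional zero is fine.

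The gap is in the final step. Bounding the integrand pointwise by $x^{\sigma}N_m(\sigma,T)\ll x(\log mT)^{A}$ and multiplying by the length of the interval and by the prefactor $\log x$ yields $x(\log x)^{A+1}$, so you need $A\le 13$. But the density input actually available --- and the one the paper invokes, Montgomery's Theorem 12.1 --- is $N_m(\sigma,T)\ll (mT)^{2(1-\sigma)/\sigma}(\log mT)^{14}$ for $4/5\le\sigma\le 1$: the log-exponent is $14$, not $13$, precisely in the range $\sigma$ near $1$ where your pointwise bound $x^{\sigma}(mT)^{c(1-\sigma)}\le x$ is sharp. (The $(\log)^{13}$ version holds only on $1/2\le\sigma\le 4/5$, where the contribution is $\ll x^{1-\delta}$ and harmless.) As written, your argument therefore proves $x(\log x)^{15}$, and the asserted estimate with $A\le 13$ uniformly on $[1/2,1]$ is unreferenced; the $14$ in the statement comes from Montgomery's $(\log)^{14}$ directly, not from $13+1$. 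The repair is to evaluate the $\sigma$-integral rather than take sup times length: with $N_m(\sigma,T)\ll(mT)^{c(1-\sigma)}(\log mT)^{14}$ and $c<6$,
\begin{equation*}
\int_{\sigma_0}^{1}x^{\sigma}(mT)^{c(1-\sigma)}\,d\sigma=(mT)^{c}\int_{\sigma_0}^{1}\Bigl(\frac{x}{(mT)^{c}}\Bigr)^{\sigma}d\sigma\ll\frac{x}{\log\bigl(x/(mT)^{c}\bigr)}\ll\frac{x}{\log x},
\end{equation*}
since $\log\bigl(x/(mT)^{c}\bigr)\ge(1-c/6)\log x$. The resulting $1/\log x$ cancels the leading $\log x$ and gives $x(\log mT)^{14}\ll x(\log x)^{14}$; this is exactly the computation in the paper, carried out there with $c=5/2$ on $[4/5,1]$.
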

\begin{proof}
	Let us denote 
		$$\tilde{S} (x,m,T) = 		\sum_{\chi \in X_m } \; \sideset{}{'} \sum_{\substack{ \rho \in Z_{\chi} \\ |\gamma | \leq T}} x^{\beta}.$$
	The lemma is a consequence of well-known bounds for the number $N(\sigma, T, m)$ related to the number of zeros of $L(s, \chi)$ in a rectangle. In particular, for $0 \leq \sigma \leq 1$ and $T \geq 2$, define 
	\begin{equation*}
		\begin{cases}
		N(\sigma, T, \chi) = \# \left\{ \rho = \beta+ i \gamma  \in Z_{\chi} \; : \; \beta \geq \sigma, \; |\gamma| \leq T \right \} \\[1.3ex]
			N( \sigma, T, m) = \sum \limits_{\chi \in X_m} N(\sigma, T, \chi).
		\end{cases}
	\end{equation*} 
It is well known that $N(0, T, \chi) \ll T \log(mT)$ (e.g. see \S 16 of \cite{dave}) and thus $N(0, T, m) \ll mT \log(mT)$. We will also use the following result of Montgomery \cite[Theorem 12.1]{mont}; see also \cite{mont1}: \\

For $\sigma \geq 4/5$ and $T \geq 2$, 
\begin{equation} \label{mont1}
	N(\sigma, T, m) \ll (mT)^{\frac{2(1- \sigma)}{\sigma}}(\log mT)^{14} \ll (mT)^{\frac{5}{2}(1- \sigma)} (\log mT)^{14}.
\end{equation}
A similar result can also be found in \cite{huxjut}. We rewrite $\tilde{S}(x, m, T)$ as 
\begin{align*}
		\tilde{S}(x, m, T) & = \sum_{\chi \in X_m } \; \sideset{}{'} \sum_{\substack{ \rho \in Z_{\chi} \\ |\gamma | \leq T \\ \beta < 4/5}} x^{\beta}  \; + \;  \sum_{\chi \in X_m } \; \sideset{}{'} \sum_{\substack{ \rho \in Z_{\chi} \\ |\gamma | \leq T \\ 4/5 \leq \beta < 1}} x^{\beta}.
\end{align*}
The first summand is $$ \ll x^{4/5} N(0, T, m) \ll x^{4/5} (mT) (\log mT) \ll x^{4/5 + 1/6} \log x \ll x,$$
where the last inequality follows from the imposed condition $x \geq (mT)^6$. The second summand is 
\begin{align*}
	& \leq \left| \int_{4/5}^{1} x^{\sigma} d_{\sigma} N(\sigma, T , m) \right| \leq x^{4/5}N(4/5, T, m)+ \left| \int_{4/5}^{1} (x^{\sigma} \log x)  N(\sigma, T , m) \; d \sigma \right|  \\[1.3ex]
		& \ll x^{4/5} (mT)^{1/2}(\log mT)^{14} + (\log x) (mT)^{5/2}(\log mT)^{14} \int_{4/5}^1 \left( \frac{x}{(mT)^{5/2}}\right)^{\sigma} \; d\sigma.
\end{align*}
Note that the first term is $\ll x$. For the integral, we have 

\begin{align*}
	\int_{4/5}^1 \left( \frac{x}{(mT)^{5/2}}\right)^{\sigma} \; d\sigma &= \left[  \frac{\left( \frac{x}{(mT)^{5/2}}\right)^{\sigma}}{ \log \left( \frac{x}{(mT)^{5/2}}\right)} \right]_{4/5}^1 \\[1.3ex] 
		& \ll  \frac{x}{(mT)^{5/2}  (\log x)},
\end{align*}
and so the second term is $ \ll x (\log mT)^{14} \ll x (\log x)^{14}$. Hence, the lemma is proved. 
\end{proof}

\begin{lm}\label{sublem4}
For $T > 1$ and $x \geq (mT)^6$ we have 
	\begin{equation} 
		\sum_{\chi \in X_m } \; \sideset{}{'} \sum_{\substack{ \rho \in Z_{\chi} \\ |\gamma | \leq T}} \sum_{k=0}^{r}  \left(  \frac{1}{|\rho-1|^{k+1}} + \frac{1}{|\rho|^{k+1}}   \right) x^{\beta} 	\ll (r+1) x (\log x)^{r+15}.
\end{equation}
\end{lm}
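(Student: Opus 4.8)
The strategy is to combine the pointwise bound on the inner $k$-sum with the zero-counting input from Lemma \ref{sublem3}. First I would dispose of the $|\rho|$-denominators: splitting the range of $\gamma$ into $|\gamma| \leq 1$ and $1 < |\gamma| \leq T$, Lemma \ref{sublem1} (summed over $\chi$, which costs only a factor $|X_m| < m \leq x^{1/6}$) handles the low-lying zeros with a bound $\ll (r+1)\, x\, (\log m)^{r+2} \cdot m \ll (r+1)\, x^{7/6} (\log x)^{r+2}$ — but this is too lossy, so instead I would keep the sum over $\chi$ together and argue more carefully, as in the proof of Lemma \ref{sublem3}. The cleaner route: for $|\gamma| > 1$ we have $|\rho| \geq |\gamma| > 1$ and $|\rho - 1| \geq |\gamma| > 1$, so $\sum_{k=0}^r (|\rho-1|^{-(k+1)} + |\rho|^{-(k+1)}) \leq 2(r+1)$, and then
\begin{equation*}
\sum_{\chi \in X_m} \;\sideset{}{'}\sum_{\substack{\rho \in Z_\chi \\ 1 < |\gamma| \leq T}} \sum_{k=0}^r \left( \frac{1}{|\rho-1|^{k+1}} + \frac{1}{|\rho|^{k+1}}\right) x^\beta \ll (r+1) \sum_{\chi \in X_m}\;\sideset{}{'}\sum_{\substack{\rho \in Z_\chi \\ |\gamma| \leq T}} x^\beta \ll (r+1)\, x (\log x)^{14}
\end{equation*}
directly by Lemma \ref{sublem3}.

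For the remaining low zeros $|\gamma| \leq 1$, the denominators $|\rho|^{-(k+1)}$ and $|\rho-1|^{-(k+1)}$ can be as large as $(\log m)^{k+1} \leq (\log m)^{r+1}$ by Theorem (A) (the exceptional zero being excluded by the primed sum). So the contribution is
\begin{equation*}
\ll (r+1)(\log m)^{r+1} \sum_{\chi \in X_m} \;\sideset{}{'}\sum_{\substack{\rho \in Z_\chi \\ |\gamma| \leq 1}} x^\beta \ll (r+1)(\log m)^{r+1} \cdot x (\log x)^{14} \ll (r+1)\, x (\log x)^{r+15},
\end{equation*}
again invoking Lemma \ref{sublem3} with $T = 2$ (valid since $x \geq (mT)^6 \geq (2m)^6$). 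Adding the two pieces gives the claimed bound $\ll (r+1)\, x (\log x)^{r+15}$, the second piece dominating.

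The only subtlety — and the main thing to get right — is the treatment of the exceptional (Siegel) zero $\beta_1$, which is removed throughout by the primed summations $\sideset{}{'}\sum$, so its potentially large denominator $|1-\beta_1|^{-(k+1)} \ll m^{\e(k+1)}$ never enters. One should check that Lemma \ref{sublem3}'s hypothesis $x \geq (mT)^6$ is compatible with the $T$ chosen here (it is, taking $T=2$ for the $|\gamma|\leq 1$ part and the given $T$ for the $|\gamma|>1$ part, both under the standing assumption $x \geq (mT)^6$), and that the factor $|X_m| \asymp m$ coming from summing over characters has already been absorbed into Lemma \ref{sublem3} rather than being applied a second time. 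I do not expect any real obstacle here; this lemma is bookkeeping that packages Lemmas \ref{sublem1} and \ref{sublem3} into the exact shape needed to bound \eqref{zerosum2} and thereby run the unconditional argument for Theorem \ref{mainthm3}.
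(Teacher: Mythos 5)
Your proof is correct and follows essentially the same route as the paper: both arguments split the zeros by location, use Theorem (A) to control the denominators $|\rho|^{-(k+1)}$, $|\rho-1|^{-(k+1)}$ for the low-lying non-exceptional zeros (contributing the $(\log m)^{r+1}$ factor), and feed everything into the averaged zero-density bound of Lemma \ref{sublem3}. Your bookkeeping is in fact marginally cleaner (splitting at $|\gamma|=1$ and using $|\rho|,|\rho-1|\geq|\gamma|>1$ avoids the paper's $\beta\geq 4/5$ case distinction and its dyadic decomposition in $\gamma$), and the only loose end --- invoking Lemma \ref{sublem3} with $T=2$ when the hypothesis only guarantees $x\geq(mT)^6$ with $T>1$ --- is present in the paper's own proof as well and is harmless in the application, where $T=m$.
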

\begin{proof}
Keeping notation similar to that in \cite[5.6]{ihara2}, we denote
\begin{equation}
	S(x, r, m , T) = \sum_{\chi \in X_m } \; \sideset{}{'} \sum_{\substack{ \rho \in Z_{\chi} \\ |\gamma | \leq T}} \sum_{k=0}^{r}  \left(  \frac{1}{|\rho-1|^{k+1}} + \frac{1}{|\rho|^{k+1}}   \right) x^{\beta}.
\end{equation}
Note that for all $\rho$ with $\beta \leq \frac{4}{5}$, $S(x, m, T) \ll (r+1) x^{4/5} (\log mT)^{e} \ll (r+1) x$, for some sufficiently large power $e$ depending on $r$. This can be easily seen as a slight modification of Lemma \ref{sublem1}. We therefore focus on the zeros $\rho$ with $\beta \geq \frac{4}{5}$. In this case, as before, we divide the sum into the ranges $|\gamma| \leq 2$ and $2 < |\gamma| \leq T$. 
Since $\beta \geq \frac{4}{5}$, we have $$\text{min}(\beta, 1-\beta) = 1-\beta > \frac{c}{ \log(mT)}.$$ 
Thus, \begin{align*}
	&|\rho|^{k+1} > \beta^{k+1} > \left(\frac{4}{5}\right)^{k+1} \;\;\; \text{and} \\
	&|1-\rho|^{k+1} \geq (1-\beta)^{k+1} > \frac{c^{k+1}}{(\log mT)^{k+1}}.
\end{align*}
Thus, we have
\begin{equation} \label{sublemeqn1}
	S(x, r , m, T) \ll (r+1)(\log mT)^{r+1} \tilde{S}(x,m,2) + S_1(x, r, m,T),
\end{equation}
where
\begin{align}\label{sublemeqn2}
\nonumber S_1(x,r, m,T) &= \sum_{\chi \in X_m } \; \sideset{}{'} \sum_{\substack{ \rho \in Z_{\chi} \\ 2<|\gamma | \leq T}} \sum_{k=0}^{r}  \left(  \frac{1}{|\rho-1|^{k+1}} + \frac{1}{|\rho|^{k+1}}   \right) x^{\beta} \\ \nonumber
&\ll (r+1) \sum_{\chi \in X_m} \; \sideset{}{'} \sum_{2 < |\gamma| \leq T} \frac{x^\beta}{\gamma^2} \\[1.3ex]
\nonumber & \leq (r+1)  \sum_{\substack{j \geq 0 \\ 2^{j+1} \leq T}}  \frac{1}{4^j} \;  \sum_{\chi \in X_m} \; \sideset{}{'} \sum_{2^j < |\gamma| \leq 2^{j+1}} x^\beta \\[1.3ex] 
& \leq (r+1)  \sum_{\substack{j \geq 0 \\ 2^{j+1} \leq T}} \frac{\tilde{S}(x, m, 2^{j+1})}{4^j} \ll (r+1)  x(\log x)^{14}.
\end{align}
Since $x \geq (mT)^6$, we thus get, putting equation (\ref{sublemeqn1}) and (\ref{sublemeqn2}) together, 
\begin{equation*}
	S(x, r, m,T) \ll (r+1) x (\log x)^{r+15}.
\end{equation*}
\end{proof}
We will now put these lemmas together using the following elementary inequality. This was used in 6.8 of \cite{iharaM} and 5.3 of \cite{ihara2}. We include a short proof as well.
\begin{pr} For any $w, z \in \C$ we have
\begin{equation*}
	|P^{(a,b)} (z+w) - P^{(a,b)}(z)| \leq (a+b)|w| (|z| + |w|)^{a+b-1}.
\end{equation*}
\end{pr}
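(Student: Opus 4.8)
The plan is to prove the inequality $|P^{(a,b)}(z+w) - P^{(a,b)}(z)| \leq (a+b)|w|(|z|+|w|)^{a+b-1}$ by recalling that $P^{(a,b)}(\zeta) = \zeta^a \overline{\zeta}^b$ is, after separating the holomorphic and antiholomorphic parts, a polynomial of total degree $a+b$ in the two variables $\zeta$ and $\overline{\zeta}$, and then applying the elementary bound $|u^n - v^n| \le n\,|u-v|\,(\max(|u|,|v|))^{n-1}$ together with a telescoping argument. Concretely, I would write
\begin{equation*}
P^{(a,b)}(z+w) - P^{(a,b)}(z) = (z+w)^a\overline{(z+w)}^b - z^a \overline{z}^b,
\end{equation*}
and interpolate through the intermediate term $z^a \overline{(z+w)}^b$, so that the difference splits as
\begin{equation*}
\bigl((z+w)^a - z^a\bigr)\overline{(z+w)}^b \;+\; z^a\bigl(\overline{(z+w)}^b - \overline{z}^b\bigr).
\end{equation*}

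**Next**, I would bound each of the two pieces. For the factorization $u^n - v^n = (u-v)(u^{n-1} + u^{n-2}v + \cdots + v^{n-1})$, taking $u = z+w$, $v = z$ (so $u - v = w$) gives $|(z+w)^a - z^a| \le |w| \cdot a \,(\max(|z+w|,|z|))^{a-1} \le a|w|(|z|+|w|)^{a-1}$, since $|z+w| \le |z|+|w|$; and likewise $|\overline{(z+w)}^b - \overline{z}^b| = |(z+w)^b - z^b| \le b|w|(|z|+|w|)^{b-1}$. Combining, and using $|z| \le |z|+|w|$ and $|\overline{(z+w)}^b| = |z+w|^b \le (|z|+|w|)^b$, the first piece is $\le a|w|(|z|+|w|)^{a-1}(|z|+|w|)^b = a|w|(|z|+|w|)^{a+b-1}$ and the second is $\le (|z|+|w|)^a \cdot b|w|(|z|+|w|)^{b-1} = b|w|(|z|+|w|)^{a+b-1}$. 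Adding yields the claimed bound $(a+b)|w|(|z|+|w|)^{a+b-1}$. One should treat the degenerate cases $a=0$ or $b=0$ separately, where the corresponding empty product / empty difference is handled by the convention that the missing term contributes $0$; the stated inequality remains valid (and when $a=b=0$ both sides are $0$).

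**The only subtlety**, and hence the "main obstacle" — really a minor bookkeeping point rather than a genuine difficulty — is to make sure the geometric-sum factorization bound $|u^{n-1} + u^{n-2}v + \cdots + v^{n-1}| \le n(\max(|u|,|v|))^{n-1}$ is applied with the right exponent and that the interpolation term is inserted in a way that keeps both resulting factors controlled by $(|z|+|w|)$ raised to nonnegative powers. Once the telescoping is set up as above this is immediate, so the proof is short; the estimate will then be invoked in the unconditional argument to pass from bounds on $\Phi(\chi,r,x)$ to bounds on $P^{(a,b)}(\mathcal{L}^{(r)}(1,\chi))$ with the error coming from the difference $\mathcal{L}^{(r)}(1,\chi) - (-1)^{r+1}\Phi(\chi,r,x)$ playing the role of $w$.
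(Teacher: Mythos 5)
Your proof is correct and follows essentially the same route as the paper: you insert the same intermediate term $z^a\,\overline{(z+w)}^b$ to split the difference into two pieces, bound each by the elementary one-variable estimate $|u^n - v^n|\le n\,|u-v|\,(|z|+|w|)^{n-1}$, and add. The only (immaterial) difference is that you derive that elementary estimate from the factorization $u^n-v^n=(u-v)(u^{n-1}+\cdots+v^{n-1})$, whereas the paper expands $(z+w)^n-z^n$ by the binomial theorem and uses $\binom{n}{i}\le n\binom{n-1}{i-1}$; both yield the same bound.
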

\begin{proof}
	First, note that for any $n \geq 1$, 
	\begin{align*}
		|(z+w)^n - z^n| &= \left| \binom{n}{1}z^{n-1}w + \cdots + \binom{n}{n} w^n \right| \\
		& \leq n |w| \left( \sum_{i=1}^{n} \binom{n-1}{i-1} |z|^{n-i} |w|^{i-1} \right) \\
		&= n |w| (|z| + |w|)^{n-1},
	\end{align*}
where the last inequality follows from $\binom{n}{i} \leq n \binom{n-1}{i-1}$ for $1 \leq i \leq n$. Thus, 

\begin{align*}
		& |P^{(a,b)} (z+w) - P^{(a,b)}(z)|  = |(z+w)^a (\overline{z+w})^b - z^a \overline{z}^b | \;\;\; \\
		& \;\;\; = |(z+w)^a \overline{(z+w)}^b - z^a (\overline{z+w})^b +  z^a (\overline{z+w})^b- z^a \overline{z}^b | \\
		& \;\;\; \leq |z+w|^b |(z+w)^a - z^a| + |z|^a |(\overline{z+w})^b - \overline{z}^b| \\
		& \;\;\; \leq a |w| (|z| + |w|)^{a+b-1} + b (|z| + |w|)^a |\overline{w}| ( |\overline{z}| + |\overline{w}|)^{b-1}\\
			& \;\;\; \leq (a+b) |w| (|z|+|w|)^{a+b-1}.
\end{align*}
\end{proof}

Choosing $z =  \mathcal{L}^{(r)}(1, \chi)$ and $w = \Phi(\chi, r, x) - \mathcal{L}^{(r)}(1, \chi)$ gives 
\begin{align}\label{PabL}
\nonumber \left| P^{(a,b)} (\mathcal{L}^{(r)}(1, \chi)) - P^{(a,b)}(  \Phi(\chi, r, x) ) \right| & \leq (a+b) \left|  \Phi(\chi, r, x) - \mathcal{L}^{(r)}(1, \chi) \right| \cdot \\ \nonumber 
& \left( \left| \Phi(\chi, r, x) - \mathcal{L}^{(r)}(1, \chi) \right| + \left|  \mathcal{L}^{(r)}(1, \chi) \right| \right)^{a+b-1}. \\
\end{align}
\vspace{1mm}\\
Let us denote the unique real quadratic character in $X_m$ by $\chi_1$. 
\begin{pr}\label{zerosum1} $\;$ 
	\begin{enumerate} 
	\item For $\chi \in X_m$, $x \geq m$ and $\epsilon >0$, we have  \begin{align*}
		& \left| P^{(a,b)} (\mathcal{L}^{(r)}(1, \chi)) - P^{(a,b)}(  \Phi(\chi, r, x) )  \right| \\
		& \;\;\;\;\;\; \ll \begin{cases}
		  ((r+1)!2^r (\log x)^{(r+1)} m^{(r+2)\epsilon})^{a+b} \text{ \hspace{2cm} for } \chi = \chi_1  \\ 
          \left((r+1)!2^r (\log x)^{r+1}(\log m)^{r+2} \right)^{(a+b-1)} \cdot \left|  \Phi(\chi, r, x) - \mathcal{L}^{(r)}(1, \chi) \right|\;\;\;\;\;\;  \text{ for } \chi \neq \chi_1
			\end{cases}.
	\end{align*}
\item For $x \geq m^{12}$, we have
\begin{equation}
	\sum_{\substack{\chi \in X_m \\ \chi \neq \chi_1}} \left|  \Phi(\chi, r, x) - \mathcal{L}^{(r)}(1, \chi) \right|  \ll (\log x)^{16}.
\end{equation}
	\end{enumerate}
\end{pr}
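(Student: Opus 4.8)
Both parts rest on one reduction. For $K=\Q$, subtracting the Hadamard/Stark expansion \eqref{starklike2} (in its Dirichlet form) from the arithmetic identity \eqref{mainarithformula} gives $\Phi(\chi,r,x)-\mathcal{L}^{(r)}(1,\chi)=\pm\big(E_1(r,x)+E_2(r,x)\big)$ up to the overall sign $(-1)^{r+1}$ carried along throughout, where $E_2(r,x)=O_r((\log x)^{r+1}/x)$ is negligible and $E_1(r,x)$ is the harmless piece $\tfrac{[(-1)^r-1]}{x-1}\sum r!\,\rho^{-(r+1)}$ together with the weighted zero-sum already displayed in \eqref{zerosum2}; moreover, by \eqref{starklike2}, $\mathcal{L}^{(r)}(1,\chi)$ is itself $(-1)^r r!\sum_\rho(1-\rho)^{-(r+1)}$ plus a bounded $\Gamma$-term. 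So everything reduces to estimating $\sum_\rho(|\rho-1|^{-(k+1)}+|\rho|^{-(k+1)})x^\beta$, weighted by the $k!\binom{r}{k}(\log x)^{r-k}$ of \eqref{zerosum2} and summed over $0\le k\le r$ — for one fixed $\chi$ in part (1), and summed over $\chi$ in part (2).

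\textbf{Part (1).} I would work one character at a time and use only the classical zero-free region, so that $x^\beta\le x$ is enough. For $\chi\ne\chi_1$, Lemma \ref{sublem1} (for $|\gamma|\le 1$, which uses Theorems (A), (B)) together with Lemma \ref{sublem2} taken with $T=1$ (for $|\gamma|>1$) bound the zero-sum of \eqref{zerosum2} by $\ll(r+1)x(\log m)^{r+2}$; dividing by $x-1$ and restoring the $(r!)2^r$ prefactor of \eqref{zerosum2} makes both $|\Phi(\chi,r,x)-\mathcal{L}^{(r)}(1,\chi)|$ (via the $E_i$) and $|\mathcal{L}^{(r)}(1,\chi)|$ (via \eqref{starklike2}) at most $(r+1)!\,2^r(\log x)^{r+1}(\log m)^{r+2}$. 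Putting $z=\mathcal{L}^{(r)}(1,\chi)$ and $w=\Phi(\chi,r,x)-\mathcal{L}^{(r)}(1,\chi)$ into \eqref{PabL} and keeping the factor $|w|$ untouched then gives the $\chi\ne\chi_1$ estimate. For $\chi=\chi_1$ the one new feature is the possible exceptional real zero $\beta_1$; by Theorem (A), $1-\beta_1\gg m^{-\e}$, so its contribution to $\mathcal{L}^{(r)}(1,\chi_1)$ and to $E_1$ is $\ll_r(r!)2^r(\log x)^r(1-\beta_1)^{-(r+1)}\ll_r(r!)2^r(\log x)^r m^{(r+1)\e}$, and the remaining zeros are handled as above; thus $|\mathcal{L}^{(r)}(1,\chi_1)|$ and $|\Phi(\chi_1,r,x)|$ are each $\ll(r+1)!2^r(\log x)^{r+1}m^{(r+2)\e}$, and the claim follows from the crude bound $|P^{(a,b)}(z)-P^{(a,b)}(w)|\le|z|^{a+b}+|w|^{a+b}$ (for $\chi_1$ the difference is genuinely of the size of the values, so \eqref{PabL} offers nothing).

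\textbf{Part (2).} Here $x^\beta\le x$ summed over $\chi$ is much too lossy, and I would feed in the zero-density input. The contribution of $E_2$ to $\sum_{\chi\ne\chi_1}$ is $\ll m(\log x)^{r+1}/x$ and that of the $\sum r!\,\rho^{-(r+1)}$ piece is $\ll m(\log m)^{r+2}/x$, both $o(1)$ once $x\ge m^{12}$, so it remains to bound $\sum_{\chi\ne\chi_1}$ of the weighted zero-sum of \eqref{zerosum2}. I would take $T=m$, chosen precisely so that the hypothesis $x\ge (mT)^6$ of Lemmas \ref{sublem3}--\ref{sublem4} is exactly $x\ge m^{12}$, and split the zeros at height $T$: for $|\gamma|>T$, Lemma \ref{sublem2} applied to each $\chi$ and summed over the $\ll m$ characters contributes $\ll_r x\log x$ after restoring the weights; for $|\gamma|\le T$, Lemma \ref{sublem4} gives $\ll(r+1)x(\log x)^{r+15}$, the point being that each weight $(\log x)^{r-k}$ must be paired with the corresponding zero-free-region loss $|\rho-1|^{-(k+1)}\ll(\log mT)^{k+1}\le(\log x)^{k+1}$, so the $k$-sum costs only $O_r((\log x)^{r+1})$, not $O_r((\log x)^{2r+1})$. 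Dividing by $x-1$ and restoring the $(r!)2^r$ prefactor yields $\sum_{\chi\ne\chi_1}|\Phi(\chi,r,x)-\mathcal{L}^{(r)}(1,\chi)|\ll(\log x)^{16}$ (for general $r$, $\ll_r(\log x)^{r+15}$).

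\textbf{Main obstacle.} The delicate part is (2): one must push Montgomery's zero-density bound \eqref{mont1}, packaged in Lemmas \ref{sublem3}--\ref{sublem4}, through the $r$-fold differentiated explicit formula while keeping the power of $\log x$ under control — this is what forces the choice $T=m$ (matched to $x\ge m^{12}$) and the careful pairing of each $(\log x)^{r-k}$ with the $(\log mT)^{k+1}$ coming from the zero-free region. Part (1) is routine once the exceptional zero of $\chi_1$ is correctly isolated.
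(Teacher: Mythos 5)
Your proposal follows essentially the same route as the paper's proof: part (1) via Lemmas \ref{sublem1}--\ref{sublem2} with $T=1$, the reduction to the error terms $E_1,E_2$ through \eqref{mainarithformula}, and the inequality \eqref{PabL} (your cruder triangle-inequality bound for $\chi=\chi_1$ and your direct bound on $\mathcal{L}^{(r)}(1,\chi)$ via \eqref{starklike2} are harmless variants); part (2) via $T=m$, matched to $x\ge(mT)^6=m^{12}$, together with Lemmas \ref{sublem2} and \ref{sublem4}. One substantive point in your favor: in part (2) the paper bounds every weight $(\log x)^{r-k}$ uniformly by $(\log x)^{r}$ before invoking Lemma \ref{sublem4} and so its own proof only yields $\ll (r+1)!\,2^{r}(\log x)^{2r+15}$, which does not literally give the stated exponent $16$; your pairing of each $(\log x)^{r-k}$ with the corresponding $(\log mT)^{k+1}$ sharpens this to $\ll_{r}(\log x)^{r+15}$, which still leaves the exponent $r$-dependent, so the bound $(\log x)^{16}$ in the statement should in any case be read with an $r$-dependent exponent (it holds as written only for $r=1$).
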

\begin{proof}

		(1)  By Lemmas \ref{sublem1} and \ref{sublem2} with $T=1$ we see that, for $\chi \neq \chi_1$, 
	$$E_1(r, x) \ll 2^r (r+1)! (\log x)^r(\log m)^{r+2},$$ and for $\chi = \chi_1$, 
	$$E_1(r, x) \ll 2^r (r+1)! (\log x)^r [ (\log m)^{r+2} + m^{(r+2) \epsilon} ] \ll 2^r (r+1)!(\log x)^r m^{(r+2)\epsilon}.$$
		This inequality follows from Theorem (A), stated before Lemma \ref{sublem1}. Substituting these into \eqref{mainarithformula}, we get (note that we are abusing the notation here, i.e. we are using the same $E_1(r,x)$, etc., with the understanding that now $K=\Q$.)
	\begin{equation}
		\left| \Phi(\chi, r, x) - \mathcal{L}^{(r)}(1, \chi) \right| \ll \begin{cases}
			2^r (r+1)! (\log x)^r(\log m)^{r+2} \;\;\; \text{ for } \chi \neq \chi_1\\ 
			2^r (r+1)!(\log x)^r m^{(r+2)\epsilon} \;\;\;\;\; \text{ for } \chi = \chi_1
			\end{cases}.
	\end{equation}
	By \cite[Remark 5.3]{ghosh1} we have $\Phi (\chi, r, x) \ll |\Phi_\Q(r,x)| \leq (\log x)^{r+1}$. Combining these bounds with \eqref{mainarithformula}, we get $\mathcal{L}^{(r)}(1,\chi) \ll (r+1)!2^r (\log x)^{r+1} (\log m)^{r+2} $ for $x \geq m$ and $\chi \neq \chi_1$. For $\chi = \chi_1$, we get $\mathcal{L}^{(r)}(1, \chi_1) \ll (r+1)!2^r (\log x)^r(\log x + m^{(r+2)\epsilon})$. Substituting these bounds into equation (\ref{PabL}), for $\chi \neq \chi_1$ we get
\begin{align*}
\left| P^{(a,b)} (\mathcal{L}^{(r)}(1, \chi)) - P^{(a,b)}(  \Phi(\chi, r, x) ) \right| & \ll \left((r+1)!2^r (\log x)^{r+1}(\log m)^{r+2} \right)^{(a+b-1)} \cdot \\
& \;\;\;\;\;\; \;\;\;\;\;\;  \;\;\;\;\;\; \;\;\;\;\;\; \left|  \Phi(\chi, r, x) - \mathcal{L}^{(r)}(1, \chi) \right|,
\end{align*} 
	and, for $\chi = \chi_1$, we get 
\begin{align*}
\left| P^{(a,b)} (\mathcal{L}^{(r)}(1, \chi)) - P^{(a,b)}(  \Phi(\chi, r, x) ) \right| & \ll ((r+1)!2^r (\log x)^{(r+1)} m^{(r+2)\epsilon})^{a+b}.
\end{align*} 

(2) Putting $T=m$ in Lemma \ref{sublem2} we obtain 
\begin{equation*}
			\sum_{\chi \in X_m} \frac{1}{x-1} \sum_{|\gamma| > T } \sum_{k=0}^{r}  \left(  \frac{1}{|\rho-1|^{k+1}} + \frac{1}{|\rho|^{k+1}}   \right) x^{\beta}  	\ll \frac{(r+1)!2^r (\log x)^r (\log m)}{m}.
\end{equation*}
On the other hand, for $T=m$, Lemma \ref{sublem4} gives, for $x \geq m^{12}$ 
\begin{equation*}
		\sum_{\chi \in X_m } \frac{1}{x-1} \; \sideset{}{'} \sum_{\substack{ \rho \in Z_{\chi} \\ |\gamma | \leq T}} \sum_{k=0}^{r}  \left(  \frac{1}{|\rho-1|^{k+1}} + \frac{1}{|\rho|^{k+1}}   \right) x^{\beta} 	\ll (r+1)! 2^r  (\log x)^{2r+15}.
\end{equation*}
Therefore, $	\sum_{\substack{\chi \in X_m \\ \chi \neq \chi_1}} \left|  \Phi(\chi, r, x) - \mathcal{L}^{(r)}(1, \chi) \right| \ll (r+1)! 2^r  (\log x)^{2r+15} $.
\end{proof}
\subsection{Proof of Theorem \ref{mainthm3}} $\;$ \\
Taking $x = m^{12}$ and applying Proposition \ref{zerosum1}, we get
\begin{align*}
   \sum_{\chi \in X_m } \left| P^{(a,b)} (\mathcal{L}^{(r)}(1, \chi)) - P^{(a,b)}(  \Phi(\chi, r, x) ) \right|  &  \ll ((r+1)!2^r)^{a+b}m^{\epsilon'}.
\end{align*}
Hence, we have
\begin{align*}
    \frac{1}{|X_m|} \sum_{\chi \in X_m } \; P^{(a,b)} ( \mathcal{L}^{(r)}(1, \chi)) & = \frac{1}{|X_m|} \sum_{\chi \in X_m } \; P^{(a,b)} (\Phi(\chi, r, x) ) +O_{r,a,b}(m^{\epsilon' - 1}) \\
    & = \mu^{(a,b)}(r) + O_{r,a,b}(m^{\epsilon' - 1}).
\end{align*}
$\;$ \hfill $\qed$

\section{Li Coefficients and Zero-free regions}\label{sec4}

Let $K$ be a number field and consider the completed Dedekind zeta function
\begin{equation}\label{completedzeta}
	\xi_K(s) = s(s-1) 2^{r_2}\left( \frac{\sqrt{|d_K|} }{2^{r_2} \pi^{n_K/2}}\right)^{s} \Gamma\left(\frac{s}{2}\right)^{r_1} \Gamma(s)^{r_2} \zeta_K(s) \; , 
\end{equation}
where  $[K : \Q] = n_K$. In \cite{licoeff}, Li introduced the sequence of numbers  
\begin{equation}
	\lambda_n (K) = \frac{1}{(n-1)!} \frac{d^n}{ds^n} \left(s^{n-1} \log \xi_K(s) \right) \Big|_{s=1} \text{ \hspace{1cm} for } n \geq 1, 
\end{equation}
now known as Li coefficients, and showed the following result. \\
\textbf{Theorem (Li's Criterion).} 
The generalized Riemann hypothesis for $\z_K(s)$ holds if and only if $\; \lambda_n (K) \geq 0$  for all $n \geq 1$. \\

Let $K$ be a number field and let $d_K$ denote the absolute value of the discriminant of $K$. A well-known result of Stark \cite[Lemma 3]{stark1} says that, for $K \neq \Q$,  $\z_K(s)$ has at most one zero in the region
\begin{equation} \label{starkzerofree}
	1 - \dfrac{1}{4 \log d_K} \leq \text{ Re}(s) \leq 1, \;\;\;\; |\text{ Im}(s)| \;  \leq \dfrac{1}{4 \log d_K}.
\end{equation}

Moreover, if this zero exists, it is necessarily real and simple. As pointed out by Murty in \cite{MurStark}, it is better to call $\beta_K$ a \emph{Stark zero} than a \emph{Siegel zero} as it caters to a region less wide than that of a conjectural location of a Siegel zero. More recently, Kadiri \cite[Corollary1.2]{habiba} improved Stark's result and showed \\
\textbf{Theorem (Kadiri).} \emph{ For $d_K$ sufficiently large, $\z_K(s)$ has at most one zero in the region:
\begin{equation} \label{kadirizerofree}
	1 - \dfrac{1}{2 \log d_K} \leq \text{Re}(s) \leq 1, \;\;\;\; |\text{Im}(s)| \;  \leq \dfrac{1}{2 \log d_K}.
\end{equation}
  This zero, if it exists, is real and simple.}\\
  
  Brown in \cite{brownli} made an effort towards proving an effective version of Li's Criterion, showing that non-negativity of the first few $\lambda_i$'s gives zero-free regions of a certain shape around $s=1$. In particular, just $\lambda_2(K) \geq 0$ should imply nonexistence of the exceptional zero in the Stark region (see \cite[Theorem 5]{brownli}). However, subsequent authors and the MathSciNet reviewer have noted several errors in Brown's paper that invalidate the published proofs of the main theorems. Initially the author had intended to rewrite and thereby verify one of the theorems and include it as an appendix. However, in the process we have improved Brown's result and therefore have recorded it as a theorem in this section.
  
  \begin{tm}\label{mainthm4}
  	For a number field $K$, let $\lambda_2(K)$ denote the second Li coefficient. For sufficiently large $d_K$, if  $\lambda_2(K)\geq 0$, then $\z_K(s)$ has no zeros in the region 
  	\begin{equation} \label{myregion}
  		1 - \dfrac{1}{c \log d_K} \leq \text{Re}(s) \leq 1, \;\;\;\; |\text{Im}(s)| \;  \leq \dfrac{1}{c \log d_K},
  	\end{equation}
  	for any constant $c > 3$. In particular, there is no exceptional zero in this region.
  \end{tm}
  \begin{rem}
  	We note that the region mentioned in \eqref{myregion} is larger than Stark's region, but smaller than Kadiri's \eqref{kadirizerofree}. 
  \end{rem}
  We will record a few lemmas before proving the theorem. 
   \begin{lm}\label{xiupperbound}
  	For real $\sigma > 1$, and $\rho$ varying over all nontrivial zeros of $\z_K(s)$, we have
  	$$\sum_{\rho} \dfrac{1}{\sigma - \rho} \leq \frac{1}{\sigma - 1} + \frac{1}{2} \log d_{K}.$$
  \end{lm}
  
  This is a standard result. In fact, one can show the stronger result $$\sum \frac{1}{\sigma - \rho} \leq \frac{1}{\sigma - 1} + \frac{1}{2} \log d_{K} + \frac{\zeta_K'(s)}{\zeta_K(s)},$$ see \cite[Lemma 6.2]{anupram}. \\
  The next lemma holds for a slightly larger region and is recorded as such. For $A \geq 2$, assume there is at most one real zero $\beta_K$ in the region
  \begin{equation}\label{genStark}
  	1 - \dfrac{1}{A \log d_K} \leq \Re(s) \leq 1, \text{ and  } | \Im(s)| \leq \frac{1}{A \log d_K}.
  \end{equation} 
  \begin{lm}\label{supp1}
  	For every nontrivial zero $\rho$ of $\z_K(s)$ such that $\rho \not\in \{\beta_K, 1-\beta_K\}$, we have
  	\begin{enumerate}[label=(\roman*)]
  		\item $ \displaystyle \sum_{\rho \not\in \{ \beta_K, 1- \beta_K\} } \left( \dfrac{1}{\rho} + \dfrac{1}{1-\rho} \right)  \leq (A+1)  \sum_{\rho \not\in \{ \beta_K, 1- \beta_K\} }\left( \dfrac{1}{x - \rho} + \dfrac{1}{x - (1-\rho)} \right),$\\[1.2ex]
  		\item $\displaystyle \sum_{\rho \not\in \{ \beta_K, 1- \beta_K\} }\left( \dfrac{1}{|\rho|^2} + \dfrac{1}{|1-\rho|^2} \right)  \leq A(A+1) \sum_{\rho \not\in \{ \beta_K, 1- \beta_K\} }\left( \dfrac{1}{x - \rho} + \dfrac{1}{x-(1-\rho)} \right) \log d_K,$ \\[1.2ex]
  	\end{enumerate}
  	where $x= 1 + \frac{1}{\log d_K}$. 
  \end{lm}
  \begin{proof}
  	(i) For any zero $\rho$ of $\xi_K(s)$, we rewrite $\rho = 1 - \frac{\beta + i \gamma}{\log d_K}$. If $x \geq 1$ is real, then we have 
  	\begin{align*}
  		\text{Re} & \left( \frac{1}{x - \rho} + \frac{1}{x - (1 - \rho)} \right) \\ 
  		&= \left(\frac{\beta + (x-1) \log d_K}{(\beta+(x-1)\log d_K)^2 + \gamma^2} + \frac{x \log d_K - \beta}{(x \log d_K - \beta)^2 + \gamma^2}\right) \log d_K.       
  	\end{align*}
  	Substituting $x= 1$, and $1 + \frac{1}{\log d_K}$ respectively, we get 
  	\begin{equation}
  		\text{Re} \left( \frac{1}{1 - \rho} + \frac{1}{ \rho} \right) = \left(\frac{\beta }{\beta^2 + \gamma^2} + \frac{ \log d_K - \beta}{( \log d_K - \beta)^2 + \gamma^2}\right) \log d_K.
  	\end{equation}
  	\begin{equation}
  		\text{Re} \left( \frac{1}{x - \rho} + \frac{1}{x - (1 - \rho)} \right) = \left(\frac{\beta + 1}{(\beta+1)^2 + \gamma^2} + \frac{1 + \log d_K - \beta}{(1+ \log d_K - \beta)^2 + \gamma^2}\right) \log d_K.
  	\end{equation}
  	Note that, for $\rho \neq \beta_K$, by \eqref{genStark} we have, $\beta \geq \frac{1}{A}$ or $|\gamma| \geq \frac{1}{A}$. \vspace{2mm}\\
  	\textbf{Case I: $\beta \geq \frac{1}{A}$.} We analyze the two summands separately. We have
  	\begin{align*}
  		\frac{\beta}{\beta^2 + \gamma^2} \frac{(\beta+1)^2 + \gamma^2}{\beta+1} & \leq \frac{\beta}{\beta+1} \left( \frac{\beta + 1}{\beta} \right)^2 = \frac{\beta+1}{\beta} \leq A+1, \\
  		\text{and so we have }\;  \frac{\beta}{\beta^2 + \gamma^2} & \leq (A+1) \frac{(\beta+1)^2 + \gamma^2}{\beta+1}. \text{ Similarly,}
  	\end{align*}
  	\begin{align*}
  		\frac{\log d_K - \beta}{(\log d_K - \beta)^2 + \gamma^2} \cdot \frac{(1+ \log d_K - \beta)^2 + \gamma^2}{1+\log d_K - \beta} & \leq 1 + \frac{1}{\log d_K - \beta}.
  	\end{align*}
  	Now note that, since Re$(1 - \bar{\rho})$ is also outside the region in \eqref{genStark}, we have, $\frac{\beta}{\log d_K} \leq 1 - \frac{1}{A \log d_K}$. That is, $(\log d_K - \beta) \geq A$ and so we have 
  	\begin{align*}
  		\frac{\log d_K - \beta}{(\log d_K - \beta)^2 + \gamma^2} \cdot \frac{(1+ \log d_K - \beta)^2 + \gamma^2}{1+\log d_K - \beta} & \leq 1 + \frac{1}{\log d_K - \beta}\leq A+1. 
  	\end{align*}
  	Combining these two cases, we have, for $\beta \geq \frac{1}{A}$, 
  	$$\text{Re} \left( \frac{1}{1 - \rho} + \frac{1}{ \rho} \right) \leq (A+1) \text{ Re} \left( \frac{1}{x - \rho} + \frac{1}{x - (1 - \rho)} \right).$$
  	\textbf{Case II: $\beta < \frac{1}{A}$ and $|\gamma| \geq \frac{1}{A}$.}
  	\begin{align*}
  		\frac{\beta}{\beta^2 + \gamma^2} \frac{(\beta+1)^2 + \gamma^2}{\beta+1} & \leq \frac{\beta}{\beta+1} \left( 1+ \frac{2\beta + 1}{\gamma^2} \right) \\
  		& \leq \left( 1 - \frac{1}{\beta + 1} \right) \left( 1 + (\frac{2}{A}+1) \cdot A^2 \right)\\ 
  		& \leq \frac{1}{(A+1)} \cdot (A+1)^2 = A+1. 
  	\end{align*}
  	Similarly we have, 
  	\begin{align*}
  		\frac{\log d_K - \beta}{(\log d_K - \beta)^2 + \gamma^2} &\cdot \frac{(1+ \log d_K - \beta)^2 + \gamma^2}{1+\log d_K - \beta}  \leq 1 + \frac{1}{\log d_K - \beta}  \\
  		&  \leq 1 + \frac{A}{A\log d_K - 1} \\
  		&  \leq 1 + \frac{A}{A\log 3 - 1} \leq 1+A.
  	\end{align*}
  	This follows from the fact that the smallest possible absolute value of the discriminant of a number field $K \neq \Q$ is $3$, attained by the quadratic field $\Q(\sqrt{-3})$. Also, note that the last inequality holds as $A \geq 2$ and so, in particular, $A \geq \frac{2}{\log 3} \sim 1.8205 $. This completes the proof of (i) in the lemma.\\
  	
  	(ii) We note that
  	\begin{equation*}
  		\frac{1}{|\rho|^2} + \frac{1}{|1-\rho|^2} = (\log d_K)^2 \left(\frac{1}{\beta^2 + \gamma^2} + \frac{1}{(\log d_K - \beta)^2 + \gamma^2} \right).
  	\end{equation*}
  	Like before, for $\beta \geq \frac{1}{A}$ we have,
  	\begin{align*}
  		\frac{1}{\beta^2 + \gamma^2} \frac{(\beta+1)^2 + \gamma^2}{\beta+1}  \leq \frac{1}{\beta+1} \left( \frac{\beta + 1}{\beta} \right)^2  & \leq \frac{A}{A+1} (A+1)^2 = A(A+1) \\   
  		\frac{1}{(\log d_K - \beta)^2 + \gamma^2} \cdot \frac{(1+ \log d_K - \beta)^2 + \gamma^2}{1+\log d_K - \beta} &\leq \frac{1}{1 + \log d_K - \beta}\left( 1 + \frac{1}{\log d_K - \beta}\right)^2 \\
  		&\leq A(A+1).
  	\end{align*}
  	For the case $\beta < \frac{1}{A}$ and $|\gamma| \geq \frac{1}{A}$, let
  	\[
  	F(\beta,\gamma)
  	=
  	\frac{1}{\beta^2+\gamma^2}
  	\frac{(\beta+1)^2+\gamma^2}{\beta+1}.
  	\]
  	First, expand the numerator:
  	\[
  	(\beta+1)^2+\gamma^2
  	=
  	(\beta^2+\gamma^2)+2\beta+1.
  	\]
  	Hence
  	\[
  	F(\beta,\gamma)
  	=
  	\frac1{\beta+1}
  	+
  	\frac{2\beta+1}{(\beta+1)(\beta^2+\gamma^2)}.
  	\]
  	
  	We will focus on the second summand. Since $\beta > 0$, we see that $\frac{2\beta+1}{\beta + 1}>0$. Thus, for fixed $\beta$, writing $|\gamma|=t$ we get
  	\[
  	F(\beta,t)
  	=
  	\frac1{\beta+1}
  	+
  	\frac{2\beta+1}{(\beta+1)(\beta^2+t^2)}.
  	\] 
  	Therefore 
  	\[
  	\frac{\partial F}{\partial t}
  	=
  	-\frac{2t(2\beta+1)}{(\beta+1)(\beta^2+t^2)^2}
  	<0.
  	\]
  	Thus $F$ is decreasing as $|\gamma|$ increases. Therefore, its maximum under the constraint
  	$	|\gamma|\ge\frac1A$
  	occurs at $	|\gamma|=\frac1A$. That is, 
  	\[
  	F(\beta,\gamma) \leq F(\beta, {A^{-1}}) = \frac{1}{\beta+1} \left[ 1 + \frac{2\beta+1}{(\beta^2+A^{-2})} \right] \leq 1 + \frac{2\beta+1}{(\beta^2+A^{-2})}  .
  	\] 	
  	We will show that $\frac{2\beta+1}{(\beta^2+A^{-2})}  \leq A(A+1)-1$. This is equivalent to 
  	\[
  	(A(A+1)-1) \beta^2 -2\beta  + (A(A+1)-1)A^{-2}-1 \geq 0.
  	\]
  	Simplifying we get,
  	\begin{equation}\label{lem_inter}
  		(A^2 + A -1) \beta^2 - 2 \beta + ( A^{-1} - A^{-2}) \geq 0.
  	\end{equation}
  	Call the quadratic on the left-hand side $Q(\beta)$. Since $Q$ has a positive leading coefficient, it is sufficient to check the minimum value. The minimum of $Q$ occurs at 
  	$$\beta^* = \frac{1}{(A^2 + A -1)}.$$
  	Thus, $
  	Q(\beta^*) = - \dfrac{1}{(A^2 + A -1)} + ( A^{-1} - A^{-2}) = \dfrac{A-1}{A^2} - \dfrac{1}{(A^2 + A -1)}. 
  	$
  	Note that,
  	\begin{align*}
  		&  \; \dfrac{A-1}{A^2} \geq \dfrac{1}{(A^2 + A -1)} \\
  		\Leftrightarrow & \;  (A-1)(A^2 + A -1) \geq A^2 \\
  		\Leftrightarrow & \;  A^3 - A^2 -2A  +1 \geq 0.
  	\end{align*}
  	The last inequality holds for any $A \geq 2$. Thus $Q(\beta^*) \geq 0$ and so, 
  	\[
  	F(\beta, \gamma) \leq 1 + 1 + \frac{2\beta+1}{(\beta^2+A^{-2})}  \leq 1 + A(A-1) -1 \leq A(A+1).
  	\]
  	The remaining term is easier. We have 
  	\begin{align*}
  		\frac{1}{(\log d_K - \beta)^2 + \gamma^2} \cdot \frac{(1+ \log d_K - \beta)^2 + \gamma^2}{1+\log d_K - \beta} &\leq \frac{1}{1 + \log d_K - \beta}\left( 1 + \frac{1}{\log d_K - \beta}\right)^2 \\
  		& \leq \dfrac{1 + \log d_K -\beta}{(\log d_K - \beta)^2} \\ 
  		&\leq \dfrac{1}{(\log d_K - \beta)^2} + \dfrac{1}{(\log d_K - \beta)}\\
  		& \leq \dfrac{A^2}{(A\log d_K - 1)^2} + \dfrac{A}{(A\log d_K - 1)} \\
  		& \leq \dfrac{A^2}{(A\log 3 - 1)^2} + \dfrac{A}{(A\log 3 - 1)} \leq A^2 + A.
  	\end{align*}
  	As before, the last inequality follows from the assumption $A \geq 2$, in particular, $A \geq \frac{2}{\log 3}$.
  	This completes the proof.
  	\end{proof}
  	
  	\textbf{Proof of Theorem \ref{mainthm4}.} Let  $\lambda_2(K) \geq 0$ and assume that the real exceptional zero $\beta_K$ of $\zeta_K(s)$ exists. We will derive a contradiction. 
  	 In \cite[Lemma 3.1]{licoeff}, Li showed that the following identity
  	\begin{equation*}
  		\lambda_n(K) = \sum_{\rho} \left( 1 - \left(1 - \dfrac{1}{\rho} \right)^n \right)
  	\end{equation*}
  	holds for every positive integer $n$, where summation is taken over all nontrivial zeros of $\zeta_K(s)$. By pairing up $\rho$ and $1-\rho$ we get  
  	\begin{equation}
  		2 \lambda_n(K) = \sum_{\rho}  2 - \left( \dfrac{\rho - 1}{\rho} \right)^n  -  \left( \dfrac{\rho}{\rho-1} \right)^n.
  	\end{equation}
  	In particular, we have 
  	\begin{align}\label{Li2main}
  		\nonumber 2 \lambda_2(K) & = \sum_{\rho}  2 - \left( \dfrac{\rho - 1}{\rho} \right)^2  -  \left( \dfrac{\rho}{\rho-1} \right)^2 \\ \nonumber
  		\nonumber &= \sum_{\rho}  2 - \left( 1 - \dfrac{1}{\rho} \right)^2 - \left( 1+ \dfrac{1}{\rho-1} \right)^2 \\
  		\nonumber  &= \sum_{\rho} 2 \left( \dfrac{1}{\rho} + \dfrac{1}{1 - \rho} \right) - \left( \dfrac{1}{\rho^2} + \dfrac{1}{(1 - \rho)^2} \right) \\
  		\nonumber  & =   2 \left( \dfrac{2}{\beta_K} + \dfrac{2}{1 - \beta_K}  - \dfrac{1}{\beta_K^2} - \dfrac{1}{(1 - \beta_K)^2} \right) + \\
  		& \; \; \; \;  \sum_{\rho \not\in \{\beta_K, 1-\beta_K\} } 2 \left( \dfrac{1}{\rho} + \dfrac{1}{1 - \rho} \right) - \left( \dfrac{1}{\rho^2} + \dfrac{1}{(1 - \rho)^2} \right). \\
  		\nonumber
  	\end{align} 
  	Substituting $x= 1 + \frac{1}{\log d_K}$ and $\rho = \beta_K = 1 - \frac{c}{\log d_K}$ we get,
  	\begin{align}\label{secondary1}
  		\nonumber \frac{1}{x - \rho} + \frac{1}{x - (1 - \rho)} &= \left( \frac{1}{1+c} + \frac{ 1}{\log d_K + 1 - c} \right) \log d_K \\
  		\nonumber &\geq \frac{A}{A+1} \log d_K + \frac{\log d_K}{\log d_K + 1} \\
  		\nonumber&\geq \frac{A}{A+1} \log d_K + \frac{\log 3}{\log 3 + 1} \\
  		&\geq \frac{A}{A+1} \log d_K + 0.5.
  	\end{align}
  	The penultimate inequality follows from the fact that $\frac{\log x}{\log x + 1}$ is increasing for $x>0$. The smallest value for $K \neq \Q$ is attained at $d_K=3$. Thus, combining this with Lemma \ref{xiupperbound} we have 
  	\begin{align}
  		\nonumber \sum_{\rho \not\in\{\beta_K, 1-\beta_K\} } \left( \frac{1}{x - \rho} + \frac{1}{x - (1 - \rho)} \right) &\leq \frac{2}{x-1} + \log d_K - 2 \left( \frac{A}{A+1} \log d_K + 0.5\right)\\
  		\nonumber & = \left(3 - \frac{2A}{A+1} \right) \log d_K - 1 \\
  		&= \left( \frac{A+3}{A+1} \right) \log d_K - 1.
  	\end{align}
  	Thus, by Lemma \ref{supp1} we have
  	\begin{equation}\label{mainpart2}
  		\sum_{\rho \not\in \{ \beta_K, 1- \beta_K\} } \left( \frac{1}{\rho} + \frac{1}{1-\rho}\right) \leq (A+3) \log d_K - (A+1).
  	\end{equation}
  	On the other hand,
  	\begin{align}\label{mainpart3}
  		\nonumber - \sum_{\rho \not\in\{\beta_K, 1-\beta_K\} } \left( \dfrac{1}{\rho^2} + \dfrac{1}{(1 - \rho)^2} \right) &\leq \left| \sum_{\rho \not\in\{\beta_K, 1-\beta_K\}} \left( \dfrac{1}{\rho^2} + \dfrac{1}{(1 - \rho)^2} \right) \right| \\
  		\nonumber & \leq \sum_{\rho \not\in\{\beta_K, 1-\beta_K\}} \frac{1}{|\rho|^2} + \frac{1}{|1-\rho|^2}\\
  		&\leq A(A+3) (\log d_K)^2  -A(A+1) \log d_K.\\ \nonumber
  	\end{align}
  	Since $\beta_K$ is real, $\frac{2}{\beta_K} - \frac{1}{\beta_K^2} \leq 1$. On the other hand, $$\frac{2}{1-\beta_K} - \frac{1}{(1-\beta_K)^2} \leq {2A \log d_K} - {A^2(\log d_K)^2}.$$  
  	We have 
  	\begin{equation}\label{mainpart1}
  		2 \left( \dfrac{2}{\beta_K} + \dfrac{2}{1 - \beta_K}  - \dfrac{1}{\beta_K^2} - \dfrac{1}{(1 - \beta_K)^2} \right) \leq 2 + 4A \log d_K - 2A^2 (\log d_K)^2.
  	\end{equation}
  	Combining the inequalities in (\ref{mainpart1}), (\ref{mainpart2}) and (\ref{mainpart3}) and substituting into (\ref{Li2main}), we get
  	\begin{align}\label{mainpart4}
  		\nonumber 2 \lambda_2 &\leq  2 + 4A \log d_K - 2A^2 (\log d_K)^2 + A(A+3) (\log d_K)^2  - A(A+1) \log d_K + \\
  		& \;\;\;\; (A+3) \log d_K - (A+1)\\
  		\nonumber &= (-A^2 + 3A) (\log d_K)^2  + (-A^2 + 4A + 3)\log d_K + (1 - A).
  	\end{align}
  	We see that for $A>3$, the right-hand side is a quadratic in $\log d_K$, with a negative leading term. Hence, for sufficiently large $d_K$, $\lambda_2$ becomes negative, giving us a contradiction!

\section{Application to cyclotomic extensions}
In this section we will apply Theorem \ref{mainthm3} to deduce positivity of $\lambda_2(K)$ for certain cyclotomic fields $K$. 
For prime $m$, let $K_m =\Q(\zeta_m)$. We have
\[
\zeta_{K_m}(s)
=
\zeta(s)
\prod_{\substack{\chi\bmod m\\ \chi\ne\chi_0}}
L(s,\chi).
\]
We also have $n_{K_m} = m-1, \; |d_{K_m}| = m^{m-2}, \; r_1=0$, and $r_2 = (m-1)/2 $. Thus we have
\begin{align*}
		\xi_{K_m}(s) &= s(s-1) 2^{\frac{(m-1)}{2}} (2\pi)^{\frac{-(m-1)s}{2}} m^{\frac{s(m-2)}{2}} \Gamma(s)^{\frac{m-1}{2}} \zeta_{K_m}(s) \\
		&= s(s-1) 2^{\frac{(m-1)}{2}} (2\pi)^{\frac{-(m-1)s}{2}} m^{\frac{s(m-2)}{2}} \Gamma(s)^{\frac{m-3}{2}} \Gamma\left(\frac{s}{2}\right) \Gamma\left(\frac{s+1}{2}\right)\frac{2^{s-1}}{\sqrt{\pi}} \zeta_{K_m}(s) \\
		&= \left(s(s-1)\pi^{-s/2}\Gamma\left(\frac{s}{2}\right) \zeta(s) \right) \left( 2^{\frac{(m-3)(1-s)}{2}} \pi^{\frac{-(m-2)s-1}{2}} m^{\frac{s(m-2)}{2}} \Gamma\left(\frac{s+1}{2}\right) \Gamma(s)^{\frac{m-3}{2}} \right) \\
		& \;\;\;\;\;\; \times \prod_{\substack{\chi\bmod m\\ \chi\ne\chi_0}}
		L(s,\chi).
		\end{align*}
Writing the completed Riemann zeta function as $\xi(s)$ and taking the logarithmic derivative we get
\begin{align*}
	\frac{\xi'_{K_m}(s)}{\xi_{K_m}(s)} &= \frac{\xi'(s)}{\xi(s)} + \sum_{\substack{\chi\bmod m\\ \chi\ne\chi_0}} \mathcal{L}(s,\chi) +
	\frac{m-2}{2}\log m - \frac{m-3}{2}\log 2 - \frac{m-2}{2}\log \pi \\
	&\;\;\;\;\; + \frac{1}{2}\psi\left(\frac{s+1}{2}\right)
	+ \frac{m-3}{2}\psi(s).
\end{align*}
Here $\psi(s)=\Gamma'(s)/\Gamma(s)$ denotes the digamma function.
Substituting $s=1$ gives
\begin{align*}
	\lambda_1(K_m) &= \frac{\xi'_{K_m}(1)}{\xi_{K_m}(1)} \\
	&= \lambda_1(\Q) + \sum_{\substack{\chi\bmod m\\ \chi\ne\chi_0}} \mathcal{L}(1,\chi)
	+ \frac{m-2}{2}\log m - \frac{m-3}{2}\log 2 - \frac{m-2}{2}\log \pi
	+ \frac{m-2}{2}\psi(1) \\
	&= \lambda_1(\Q) + \sum_{\substack{\chi\bmod m\\ \chi\ne\chi_0}} \mathcal{L}(1,\chi)
	+ \frac{m-2}{2}\log m - \frac{m-3}{2}\log 2 - \frac{m-2}{2}\log \pi
	- \frac{m-2}{2}\gamma.
\end{align*}
\subsection{Proof of Theorem \ref{mainthm5}}
We first look at the second Li coefficient for $K_m = \Q(\zeta_m)$. We have
\begin{align*}
	\lambda_{2}(K_m)
	&=
	2\frac{\xi_{K_m}'}{\xi_{K_m}}(1)
	+
	\left(\frac{\xi_{K_m}'}{\xi_{K_m}}\right)'(1)  \\
	&=  2 \lambda_1(K_m) +  \left(\frac{\xi'}{\xi}\right)'(1) + \sum_{\substack{\chi\bmod m\\ \chi\ne\chi_0}} \mathcal{L}'(1,\chi) + \frac{1}{4}\psi'(1) + \frac{m-3}{2} \psi'(1).
\end{align*}
We note that $2 \lambda_1(\Q) + \left(\frac{\xi'}{\xi}\right)'(1)  = \lambda_2(\Q)$ and $\psi'(1) = \dfrac{\pi^2}{6}$. Thus we have 
\begin{align*}
	\lambda_{2}(K_m) &=  \lambda_2(\Q)  + (m-2)\log m + \frac{2m-5}{4}\frac{\pi^2}{6} - \left[ (m-3) \log 2 + (m-2)( \log \pi + \gamma) \right] \\ 
	& \;\;\; + 2 \sum_{\substack{\chi\bmod m\\ \chi\ne\chi_0}} \mathcal{L}(1,\chi) + \sum_{\substack{\chi\bmod m\\ \chi\ne\chi_0}} \mathcal{L}'(1,\chi).
\end{align*}
Since $|X_m|=m-2$, dividing both sides by $|X_m|$ gives
\begin{align*}
	\frac{\lambda_2(K_m)}{m-2}
	&= \frac{\lambda_2(\Q)}{m-2}+\log m
	+ \frac{2m-5}{4(m-2)}\frac{\pi^2}{6}
	- \frac{m-3}{m-2}\log 2 - \log \pi - \gamma \\
	&\;\;\; + \frac{2}{|X_m|}\sum_{\chi\in X_m}\mathcal{L}(1,\chi)
	+ \frac{1}{|X_m|}\sum_{\chi\in X_m}\mathcal{L}'(1,\chi).
\end{align*}
By Theorem \ref{mainthm3}, applied with $(a,b)=(1,0)$ and with $r=0,1$, we have
\[
	\frac{1}{|X_m|}\sum_{\chi\in X_m}\mathcal{L}(1,\chi)=O_{\e}(m^{\e-1})
	\qquad \text{and} \qquad
	\frac{1}{|X_m|}\sum_{\chi\in X_m}\mathcal{L}'(1,\chi)=O_{\e}(m^{\e-1}),
\]
since $\mu^{(1,0)}(r)=0$ for these values of $r$. Hence
\begin{equation}\label{finaleq}
	\frac{\lambda_2(K_m)}{m-2}
	= \frac{\lambda_2(\Q)}{m-2}+\log m
	+ \frac{2m-5}{4(m-2)}\frac{\pi^2}{6}
	- \frac{m-3}{m-2}\log 2 - \log \pi - \gamma
	+ O_{\e}(m^{\e-1}).
\end{equation}
Therefore $\log m$ dominates the negative terms and the error in the right-hand side of \eqref{finaleq} and so, for $m$ sufficiently large, $\lambda_2(K_m) >0$. \qed

\begin{rem}
	Biane et al. gave a probabilistic interpretation in \cite[Section 2.3, p. 441]{biane} of the Riemann $\xi$ function as the Mellin transform of a probability law, namely $2\xi(s)=\mathbb E(X^s)$ for a suitable positive random variable $X$. In this interpretation, the relevant logarithmic derivatives of $\xi$ are cumulants of $\log X$, so in particular $\lambda_2(\mathbb Q)=2\lambda_1(\mathbb Q)+\operatorname{Var}(\log X)>0$. In \cite[Table 1, p. 533]{coffey} Coffey showed $\lambda_2(\Q) \sim 0.0923457$ through computational methods. 
\end{rem}
\subsection{Proof of Corollary \ref{maincor1}}
Since $\Q(\zeta_m)$ contains the quadratic field $\Q\!\left(\sqrt{(-1)^{(m-1)/2}m}\right)$, that is,
$
\Q(\zeta_m) \text{ contains }
	\Q(\sqrt m),  \text{ for }m\equiv 1\pmod 4, \text{ and }
	\Q(\sqrt{-m}), \text{ for }m\equiv 3\pmod 4.
$
For any Galois extension of number fields $L/K$, the Aramata-Brauer theorem tells us that $\zeta_L(s) / \zeta_K(s)$ is entire. Thus if $\zeta_{K_m}(s)$ is zero-free in the mentioned region, so is $\zeta_{\Q(\sqrt{m})}(s)$ or $\zeta_{\Q(\sqrt{-m})}(s)$, depending on the parity of $m$. \qed
\bibliographystyle{amsplain}
\bibliography{EulerKron}
\end{document}